\newcommand{\orsubset}{\overset{\circ}{\subset}}
\title{Bounded cohomology with coefficients in uniformly convex Banach spaces}
\author{Mladen Bestvina, Ken Bromberg and Koji Fujiwara\thanks{The
    first two authors gratefully acknowledge the support by the National
    Science Foundation. The third author is supported in part by
Grant-in-Aid for Scientific Research (No. 23244005)}} \date{\today}
\def\Bbb{\mathbb}
\newtheorem{thm}{Theorem}[section]
\newtheorem{lemma}[thm]{Lemma}
\newtheorem{cor}[thm]{Corollary}
\newtheorem{prop}[thm]{Proposition}
{}
{}
\theoremstyle{remark}
\newtheorem{example}[thm]{Example}
\newtheorem{definition}[thm]{Definition}
\newtheorem{remark}[thm]{Remark}
\newtheorem{examples}[thm]{Examples}
\newtheorem*{definition*}{Definition}
\newtheorem*{remark*}{Remark}
\def\R{{\mathbb R}}
\renewcommand{\>}{\rangle}
\begin{document}

\maketitle

\begin{abstract}
We show that for acylindrically hyperbolic groups $\Gamma$ (with no
nontrivial finite normal subgroups) and arbitrary unitary representation
$\rho$ of $\Gamma$ in a (nonzero) uniformly convex Banach space the
vector space $H^2_b(\Gamma;\rho)$ is infinite dimensional. 
The result was known for the regular representations 
on $\ell^p(\Gamma)$ with $1<p<\infty$ by a different 
argument.  But our result is new even for a non-abelian free group
in this great generality for representations, and also the case for acylindrically hyperbolic
groups follows as an application. 
\end{abstract}

\section{Introduction}
\subsection{Quasi-cocycle and quasi-action}
Let $G$ be a group and $E$ a normed vector space (usually complete,
either over $\R$ or over $\mathbb C$).  The linear or rotational part
of an isometric $G$-action on $E$ determines a representation
$\rho:G\to O(E)$ where $O(E)$ is the group of norm-preserving linear
isomorphisms $E\to E$.  We will refer to $\rho$ as a {\it unitary
  representation}.  We will usually write $\rho(g)x$ as $g(x)$ or
$gx$.

The translational part of the $G$-action is a {\it cocycle} (with
respect to $\rho$). Namely the translational part is a function
$F:G\to E$ that satisfies
\begin{equation}\label{cocycle}
F(gg')=F(g)+gF(g')
\end{equation}
for all $g,g'\in G$. Going in the other direction, if $\rho$ is a
unitary representation and $F$ a cocycle then the map $g\mapsto
(x\mapsto \rho(g)x+F(g))$ determines an isometric $G$-action on
$E$. Note that $F(g^{-1})=-g^{-1}F(g)$.

For an isometric quasi-action of $G$ on $E$ the linear part will still
be a unitary representation. However, the translational part $F$ will
become a {\it quasi-cocycle} and will only satisfy \eqref{cocycle} up
to a uniformly bounded error so that
\begin{equation}\label{quasi-cocycle}
\Delta(F):=\sup_{g,g'\in G}|F(gg')-F(g)-gF(g')|<\infty.
\end{equation}
The quantity $\Delta(F)$ is the {\it defect} of the quasi-cocycle.

A basic question is if there are quasi-actions that are not boundedly
close to an actual action. Such a quasi-action is {\it
  essential}. Since quasi-actions determine unitary representations a
more refined question is if there are essential quasi-actions for a
given unitary representation. 

The above discussion is perhaps more familiar in its algebraic form
where it can be rephrased in terms of bounded cohomology. A
quasi-cocycle $F$ can be viewed as 1-cochain in the group cohomology
twisted by the representation $\rho$. Condition \eqref{quasi-cocycle},
is equivalent to the coboundary $\delta F$ being a bounded 2-cocycle
and will therefore determine a cohomology class in $H^2_b(G;\rho)$,
the second bounded cohomology group. Now this cocycle will clearly be
trivial in the regular second cohomology group $H^2(G;\rho)$ as it is
the coboundary of a 1-cochain.  If the cochain $F$ is a bounded
distance from a cocycle then $\delta F$ will also be trivial in
$H^2_b(G;\rho)$ so we are interested in the kernel of the map
$$H^2_b(G;\rho)\to H^2(G;\rho)$$ from bounded cohomology to regular
cohomology. In particular this kernel is the vector space $QC(G;\rho)$
of all quasi-cocycles modulo the subspace generated by bounded
functions and cocycles. We denote this quotient space
$\widetilde{QC}(G;\rho)$. This is the vector space of {\it essential}
quasi-cocycles and it is the main object of study of this paper.

For the trivial representation on $\R$ a cocycle is just a
homomorphism to $\R$ and a quasi-cocycle is usually called a
quasi-morphism. When $G= F_2$, the free group on two generators,
Brooks \cite{brooks} gave a combinatorial construction of an infinite
dimensional family of essential quasi-morphisms. 

\subsection{Uniformly convex Banach space and main result}

Following the work of Brooks, there is a long history of
generalizations of this construction to other groups. Initially, the
work focused on the trivial representation. See
\cite{epstein-fujiwara,bestvina-fujiwara,rank1}. This was followed by generalizations to the
same groups $G$ but with coefficients in the regular representation
$\ell^p(G)$, $1\leq p<\infty$. See \cite{ursula.rank1,ursula2}.

In this paper
we will extend this work to unitary representations in {\it uniformly
  convex} Banach spaces. Note that this essentially includes the
previous cases since $\ell^p(G)$ is uniformly convex when $1<p<
\infty$.

If one is a bit more
careful about how the counting is done then Brooks construction of
quasi-morphisms can also be used to produce quasi-cocycles. In Brooks'
original  work (i.e., for trivial representations) it is easy to see that the quasi-morphisms are
essential. Here we will have to work harder to get the following
result.

\begin{thm}[Theorem \ref{main:free}]\label{main:free2}
Let $\rho$ be a unitary representation of $F_2$ on a uniformly convex
Banach space $E\neq 0$.
Then
$\dim\widetilde{QC}(F_2;\rho)=\infty$. 
\end{thm}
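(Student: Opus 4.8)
The plan is to manufacture vector-valued analogues of Brooks' counting quasi-morphisms and then show infinitely many of them are linearly independent in $\widetilde{QC}(F_2;\rho)$. Since $E\neq 0$ I may fix a unit vector $v\in E$. For a reduced word $w$ that has no self-overlaps and is not a proper power, I would define a $1$-cochain $F_w\colon F_2\to E$ by
\[
F_w(g)=\sum_{h}\varepsilon(h)\,\rho(h)v ,
\]
where $h$ ranges over the vertices of the geodesic $[1,g]$ in the Cayley tree at which an occurrence of $w^{\pm 1}$ ends, with $\varepsilon(h)=+1$ for a copy of $w$ and $-1$ for a copy of $w^{-1}$. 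Verifying that $F_w$ is a quasi-cocycle is the routine direction: comparing $[1,gg']$ with the concatenation $[1,g]\cup g[1,g']$, only $O(|w|)$ occurrences are created or destroyed near the cancellation locus at $g$, and each contributes a single unit vector, so $\Delta(F_w)=O(|w|)<\infty$.

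To get $\dim\widetilde{QC}(F_2;\rho)=\infty$ it suffices to produce words $w_1,w_2,\dots$ such that no nontrivial finite combination $\sum_i a_iF_{w_i}$ is within bounded distance of a genuine cocycle. I would choose the $w_i$ pairwise \emph{non-interfering}: no $w_i^{\pm1}$ occurs as a subword of $w_j^{\pm1}$ (including across junctions) for $i\neq j$, and none has self-overlaps. Suppose $\sum_i a_iF_{w_i}=F_0+B$ with $F_0$ a cocycle and $\|B\|_\infty\le M$. Evaluating on $w_j^n$, the non-interference gives $F_{w_i}(w_j^n)=0$ for $i\neq j$ and $F_{w_j}(w_j^n)=\sum_{k=1}^n\rho(w_j)^k v$, while the cocycle relation gives $F_0(w_j^n)=\sum_{k=0}^{n-1}\rho(w_j)^k F_0(w_j)$. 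Writing $y_j:=F_0(w_j)-a_j\rho(w_j)v$, the bound on $B$ becomes
\[
\Big\|\sum_{k=0}^{n-1}\rho(w_j)^k y_j\Big\|\le M\qquad\text{for all }n .
\]

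Here uniform convexity enters. Since $E$ is uniformly convex it is reflexive (Milman--Pettis), so the mean ergodic theorem applies to the isometry $\rho(w_j)$: the averages $\tfrac1n\sum_{k=0}^{n-1}\rho(w_j)^k$ converge to the projection $P_j$ onto $\mathrm{Fix}(\rho(w_j))$. Dividing the displayed bound by $n$ forces $P_jy_j=0$, that is $P_jF_0(w_j)=a_jP_jv$, and the uniform boundedness of the (unaveraged) partial sums further forces $y_j$ to telescope, $y_j=(I-\rho(w_j))z_j$. I would then exploit freeness: $F_0$ is determined by the two vectors $F_0(a),F_0(b)$, so each $F_0(w_j)$ is a prescribed sum of $|w_j|$ translates of these. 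Imposing the above constraints over sufficiently many, and sufficiently long, words $w_j$ over-determines $F_0(a),F_0(b)$ and should force every $a_j=0$.

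The step I expect to be hardest is the case of representations with no invariant vectors, where $P_j=0$ and the ergodic average is vacuous. Then $F_{w_j}(w_j^n)=\sum_{k=1}^n\rho(w_j)^k v$ grows sublinearly and its size must be detected by a genuinely quantitative use of the modulus of uniform convexity: one needs a lower bound on such partial orbit sums ruling out the cancellation a general Banach space would permit, together with a matching structural upper bound on what a cocycle can produce along the same words. Converting the boundedness of $\sum_k\rho(w_j)^k y_j$ into a rigidity statement valid uniformly over \emph{all} unitary $\rho$, and combining these constraints across the word family so as to contradict freeness, is where the real work lies.
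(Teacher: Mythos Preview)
Your construction of the Brooks quasi-cocycles $F_w$ is essentially the paper's $H_{w,e}$, and the defect bound is fine. The gap is in the detection strategy: you evaluate on powers $w_j^n$, but the paper's Examples~\ref{ab} and~\ref{U2} show this cannot work in general. If $1$ is not in the spectrum of $\rho(w_j)$ (which can happen for \emph{every} nontrivial $w_j$, as in the $U(2)$ example), then $\sum_{k=1}^n\rho(w_j)^k v$ stays bounded, so $F_{w_j}(w_j^n)$ is bounded; your displayed inequality is then trivially satisfied with $y_j=-a_j\rho(w_j)v$ regardless of $a_j$, and no information is obtained. Your own final paragraph concedes exactly this: when $P_j=0$ the ergodic constraint is vacuous, and you have no argument that $\sum_k\rho(w_j)^k v$ grows at all. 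The claim that the constraints ``over-determine $F_0(a),F_0(b)$'' is not supported.

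What the paper does instead is a genuine dichotomy. It evaluates $H_{w,e}$ not on powers of $w$ but on the much larger set $F_w=\{y_1w'y_2w'\cdots y_nw':y_i\in F\}$ for a suitable free subgroup $F$ and buffered word $w'$. Lemma~\ref{unbounded} then uses the modulus of uniform convexity (via Proposition~\ref{UC basic}(iii)) to show that either $H_{w,e}$ is unbounded on $F_w$, or the $F$-orbit of $e$ lies in a fixed half-space, whence $F$ has a nonzero invariant vector. On the cocycle side, rather than analysing arbitrary $F_0$, the paper chooses the words $w_m$ so that $H_{w_m}$ vanishes on the subgroups $\langle a^2,b\rangle$ and $\langle a^3,b\rangle$; Lemma~\ref{bounded cocycle 2} (a Ryll--Nardzewski fixed-point argument) then says any cocycle bounded on both of these is either globally bounded or again produces an invariant vector for a free subgroup. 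Either way one lands in the invariant-vector case, which is handled by composing with an invariant functional and reducing to the classical Brooks argument (Proposition~\ref{fixed vector:free}). Your proposal is missing both of these mechanisms.
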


To show $\widetilde{QC}(F_2;\rho)$ is non-trivial is already hard.
We will argue for a certain Brooks' quasi-cocycle $H$ into a Banach space $E$, 
there exists a sequence of elements in $F_2$ on which $H$ is unbounded.
For that we use that $E$ is uniformly convex in an essential way (Lemma \ref{unbounded}).
We also show those quasi-cocycle are not at bounded distance
from any cocycle using that $E$ is reflexive (using Lemma \ref{bounded cocycle 2}). 
Those two steps are the novel part of the paper. It seems that the uniform convexity 
is nearly a necessary assumption for the conclusion. See the examples
at the end of this section.

Recently Osin \cite{osin2} (see also \cite{DGO}) has
identified the class of {\em acylindrically hyperbolic groups} and
this seems to be the most general context where the Brooks'
construction can be applied.  Osin has shown that acylindrically
hyperbolic groups contain {\em hyperbolically embedded} copies of
$F_2$ and then applying work of Hull-Osin (\cite{hull.osin}) we have
the the following corollary to Theorem \ref{main:free}.
See Section \ref{s:hyp-embed} for the proof. 

\begin{cor}\label{main:acylind}
Let $\rho$ be a unitary representation of an acylindrically hyperbolic
group $G$ on a uniformly convex Banach space $E \neq 0$ and assume
that the maximal finite normal subgroup has a non-zero fixed
vector. Then $\dim \widetilde{QC}(G;\rho) = \infty$.
\end{cor}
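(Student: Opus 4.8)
The plan is to reduce to the free-group case, Theorem~\ref{main:free}, by exploiting the hyperbolically embedded copy of $F_2$ supplied by Osin and then transporting essential quasi-cocycles up to $G$ via the induction procedure of Hull--Osin. First I would invoke Osin's structure theory for acylindrically hyperbolic groups \cite{osin2} (see also \cite{DGO}): writing $K = K(G)$ for the maximal finite normal subgroup of $G$, the group $G$ contains a hyperbolically embedded subgroup $H$ isomorphic to $F_2 \times K$. Carrying the factor $K$ along is essential, since it is through the interaction of the free subgroup with the finite normal part that the hypothesis on $K(G)$ will enter.

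Next I would handle the coefficients. Restrict $\rho$ to $H = F_2 \times K$ and pass to the subspace $E^K \subseteq E$ of $K$-fixed vectors. By the standing hypothesis $E^K \neq 0$. As $E^K$ is the intersection of the kernels of the bounded operators $\rho(k) - \mathrm{Id}$, $k \in K$, it is a closed linear subspace of $E$, and a closed subspace of a uniformly convex Banach space is again uniformly convex; hence $E^K$ is a nonzero uniformly convex Banach space. Since $F_2$ and $K$ commute in $H$, the subspace $E^K$ is $F_2$-invariant, and the resulting isometric $H$-action on $E^K$ factors through the projection $H \to F_2$ (the factor $K$ acts trivially on $E^K$). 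Thus $E^K$ is a nonzero uniformly convex unitary $F_2$-module.

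Now I would apply Theorem~\ref{main:free} to the $F_2$-representation on $E^K$, producing an infinite-dimensional space of essential quasi-cocycles $F \colon F_2 \to E^K$. Each such $F$ pulls back along $H \to F_2$ to a quasi-cocycle $H \to E^K \subseteq E$ for the restricted representation $\rho|_H$, with the same defect; since pullback carries cocycles to cocycles and bounded functions to bounded functions and is seen to be injective on reduced spaces by restricting back to the $F_2$-factor, this gives $\dim \widetilde{QC}(H;\rho|_H) = \infty$. Finally I would feed these into the induction theorem of Hull--Osin \cite{hull.osin}, which for a hyperbolically embedded subgroup $H \hookrightarrow_h G$ and a unitary representation yields an induced quasi-cocycle on $G$ together with a linear injection $\widetilde{QC}(H;\rho|_H) \to \widetilde{QC}(G;\rho)$. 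Composing the two injections gives $\dim \widetilde{QC}(G;\rho) = \infty$.

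The step I expect to be the main obstacle is ensuring that \emph{essentiality survives all the way up}: an essential quasi-cocycle on $F_2$, after being pulled back to $F_2 \times K$ and induced to $G$, could a priori become a bounded distance from an honest cocycle and so vanish in $\widetilde{QC}(G;\rho)$. Ruling this out is exactly the content of the injectivity in the Hull--Osin induction, so the real work lies in confirming that their argument applies verbatim to unitary representations on an arbitrary uniformly convex Banach space (and not merely Hilbert or $\ell^p$ coefficients) and that the coefficient module $E^K$ fits their framework. The remaining reduction---that $E^K$ is nonzero and uniformly convex, which is where the fixed-vector hypothesis on $K(G)$ is used---is routine.
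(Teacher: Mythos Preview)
Your proposal is correct and follows essentially the same route as the paper: pass to the $K$-fixed subspace $E' = E^K$, apply Theorem~\ref{main:free} to the induced $F_2$-representation on $E'$, inflate to $H = F_2 \times K$, and then induce to $G$ via Hull--Osin (Theorem~\ref{Hull-Osin}). The paper packages the two middle steps as Lemma~\ref{finite-extension} and Corollary~\ref{invariant-extension}.

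One point deserves a word more than you give it. When you assert that the map $\widetilde{QC}(F_2; E^K) \to \widetilde{QC}(H; \rho|_H)$ is injective ``by restricting back to the $F_2$-factor'', note that restriction only lands you in $\widetilde{QC}(F_2; E)$, not $\widetilde{QC}(F_2; E^K)$: if the inflated quasi-cocycle is trivialized by a cocycle $G\colon H \to E$ plus a bounded function, then $G|_{F_2}$ takes values in $E$, not a priori in $E^K$. The fix is the $K$-averaging projection $\pi(x) = |K|^{-1}\sum_{k\in K}\rho(k)x$, which is $F_2$-equivariant (since $F_2$ and $K$ commute in $H$) and norm-nonincreasing; hence $\pi \circ G|_{F_2}$ is a cocycle into $E^K$ still boundedly close to the original $F$. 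This is exactly the content of the paper's Lemma~\ref{finite normal}, and with it your argument is complete and matches the paper's.

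As for your closing worry about whether Hull--Osin applies to arbitrary uniformly convex coefficients: the paper simply invokes Theorem~\ref{Hull-Osin} as stated for an arbitrary unitary $\rho$, so this is taken as a black box rather than something to be re-verified.
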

 
A wide variety of groups are acylindrically hyperbolic. In particular
our results apply to the following examples. 
  To apply our result, in all examples assume $G$ has no nontrivial
  finite normal subgroups, or more generally that for the maximal
  finite normal subgroup $N$ (see \cite{DGO}) we have that $\rho(N)$
  fixes a nonzero vector in $E$.  
\begin{examples}[Acylindrically hyperbolic groups]
\begin{itemize}
\item $G$ is non-elementary word hyperbolic,
\item $G$ admits a non-elementary 
isometric action on a connected
  $\delta$-hyperbolic space such that at least
one element is hyperbolic and WPD,
\item $G=Mod(S)$, the mapping class group of a compact surface which
  is not virtually abelian,
\item $G=Out(F_n)$ for $n\geq 2$,
\item $G$ admits a non-elementary isometric action on a $CAT(0)$ space
  and at least one element is WPD and acts as a rank 1 isometry.
  (\cite{rank1} for $\rho=\R$ and \cite{ursula.rank1} for
  $\rho=\ell^p(G)$, both under the assumption that the CAT(0) space
  $X$ and the action of $G$ are proper and $G$ contains a
  rank-1 element).
\end{itemize}
%\end{main}
\end{examples}

\begin{remark}
Recall that a Banach space is {\it superreflexive} if it admits an equivalent
uniformly convex norm. It is observed in \cite[Proposition 2.3]{bfgm}
that if $\rho:G\to E$ is a unitary representation with $E$
superreflexive, then there is an equivalent uniformly convex norm with
respect to which $\rho$ is still unitary. Thus in Corollary
\ref{main:acylind} we may replace ``uniformly convex'' with
``superreflexive''. 
\end{remark}

\begin{remark}
There is also a more direct approach to going from Theorem
\ref{main:free} to our the main theorem. The key point is that any
group $G$ covered in the the main theorem acts on a quasi-tree such
that there is a free group $F \subset G$ that acts properly and
co-compactly on a tree isometrically embedded in the quasi-tree. This
is done using the {\em projection complex} of \cite{bbf}. Using this
one can apply the Brooks' construction to produce quasi-cocycles that
when restricted to the free group are exactly the quasi-cocycles of
Theorem \ref{main:free}. We carry this out in a separate paper
(\cite{bc}).
\end{remark}

\subsection{Known examples with certain Banach spaces}
Here are some known vanishing/non-vanishing examples in the literature.

\begin{itemize}
\item $E = \R$ and $\rho$ is trivial. In this case $H^2_b(G;\rho)$ is
  the usual bounded cohomology and quasi-cocycles are quasi-morphisms.
  As we said this case was known for various kinds of groups.

\item $E = \ell^p(G)$ and $\rho$ is the regular representation, see
  \cite{ursula.cat,ursula}. When $1<p< \infty$, $\ell^p(G)$ is
  uniformly convex and our theorem applies. When $p = 1$ or $p=\infty$
  then $\ell^p(G)$ is not uniformly, or even strictly,
  convex. However, for $p=1$ summation determines a $\rho$-invariant
  functional and one can produce a family of quasi-cocycles that when
  composed with the invariant functional are an infinite dimensional
  family of non-trivial quasi-morphisms in $\widetilde{QH}(G)$
  implying that $\dim \widetilde{QC}(G;\ell^1(G)) = \infty$.
\end{itemize}
On the other hand, 
\begin{itemize}
\item
When $p=\infty$ given any quasi-cocycle one can explicitly find a
cococyle a bounded distance away so $\widetilde{QC}(G;\ell^\infty(G))=0$ for any
group $G$.
\item
If $G$ is countable and  exact (e.g., $F_2$), then $H^2_b(G;\ell_0^\infty(G))=0$.
In particular, $\widetilde{QC}(G;\ell^\infty_0(G))=0$
(Example \ref{ex.ell.0}). Here $\ell^\infty_0(G)$ is the subspace of
$\ell^\infty(G)$ consisting of sequences which are asymptotically 0. 
\end{itemize}

\noindent
There are also examples where $G$ is not acylindrically hyperbolic but where
$\widetilde{QC}(G; \rho)$ is known to be non-zero for certain actions
of $G$ on $\ell^p$ spaces.

\begin{itemize}
\item If $G$ has a non-elementary action on a $CAT(0)$ cube complex
  then $\widetilde{QC}(G; \rho) \neq 0$ where $\rho$ is the
  representation of $G$ on the space of $\ell^p$-functions ($1 \leq p
  < \infty$) on a certain space where $G$ naturally acts
  (\cite{CFI}). Note that this class of groups is closed under
  products so it contains groups that aren't acylindrically
  hyperbolic.
\end{itemize}

\noindent
There are other examples where essentially nothing is known.

\begin{itemize}
\item $E = \ell^1_0(G) \subset \ell^1(G)$ is the space of $\ell^1$-functions on $G$ that sum to zero and $\rho$ is the regular representation. Unlike with $\ell^1(G)$, $\ell^1_0(G)$ has no $\rho$-invariant functionals.

\item $E = {\mathcal B}(\ell^2(G))$ the space of bounded linear maps
  of $\ell^2(G)$ to itself. This example was suggested to us by
  N. Monod as the non-commutative analogue to $\ell^\infty(G)$.
\end{itemize}

\section{Quasi-cocycles from trees}\label{s:free}

Fix $F_2= \langle a,b\rangle$ and choose a word $w\in F_2$. For simplicity we
will assume that $w$ is cyclically reduced.
% so that no two (oriented) copies of $w$ inside a word $g$ overlap. 
Let $E$ be a normed vector space and $\rho:G\to O(E)$ a linear
representation. Also choose a nonzero $e\in E$. We now set up some
notation that will be convenient for what we will do later.

Let $[g,h]$ be an oriented segment in the Cayley graph for $F_2$ with
generators $a$ and $b$. Then we write $[g,h] \orsubset [g', h']$ if $[g,h]$ is a
subsegment of $[g',h']$ and the orientations of the two segments
agree. We then define
$$w_+(g) = \{h \in G | [h, hw] \orsubset [1,g]\}$$
and
$$w_-(g) = \{h \in H | [h,hw] \orsubset [g,1]\}.$$

Now define a function $H=H_{w,e}:F_2\to E$ by
$$H(g)=\sum_{h \in w_+(g)}h(e)-\sum_{h\in w_-(g)}h(e)$$

In other words, to a translate $h\cdot w$ we assign $h(e)$ when
traversed in the positive direction, and $-h(e)$ when traversed in
negative direction. Note that it follows that $H(g^{-1})=-g^{-1}H(g)$.

\begin{prop}\label{free-Brooks}
The function $H$ constructed above is a quasi-cocycle.
\end{prop}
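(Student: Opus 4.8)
The key geometric input is that the Cayley graph $T$ of $F_2$ is a tree, so any three vertices have a well-defined median. The plan is to recognize all three of $H(gg')$, $H(g)$, $gH(g')$ as signed counts of oriented copies of $w$ along geodesics, and to show such counts are additive up to a bounded error concentrated at the median. First I record two elementary remarks. Since $\rho$ is unitary, $|h(e)|=|e|$ for every $h\in F_2$. For an oriented geodesic $\sigma=[x,y]$ in $T$ set
$$ S([x,y]) = \sum_{[h,hw]\orsubset[x,y]} h(e) \; - \sum_{[h,hw]\orsubset[y,x]} h(e), $$
the signed sum of the vectors $h(e)$ over oriented translates of $w$ inside $\sigma$. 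By definition $H(g)=S([1,g])$ and $H(gg')=S([1,gg'])$. Using that left multiplication by $g$ is an orientation-preserving isometry of $T$ and that $g(h(e))=(gh)(e)$, reindexing the sum defining $H(g')$ gives $gH(g')=S([g,gg'])$. Finally, reversing the orientation of a segment interchanges $w_+$ and $w_-$, so $S([y,x])=-S([x,y])$.

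Next I would establish additivity with a boundary term: if $n$ denotes the length of $w$ and $m$ lies on the geodesic $[x,z]$, then
$$ \bigl| S([x,z]) - S([x,m]) - S([m,z]) \bigr| \le 2n|e|. $$
Indeed a translate $[h,hw]$ sits inside $[x,z]$ but inside neither $[x,m]$ nor $[m,z]$ exactly when it crosses $m$ in its interior; along the straight geodesic $[x,z]$ there are fewer than $n$ such copies in each of the two orientations, and each contributes a vector of norm $|e|$. Thus the displayed difference is a sum of at most $2n$ vectors of norm $|e|$.

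The main step is to apply this at the median $m$ of the three vertices $1$, $g$, $gg'$. Since $m$ lies on each of $[1,g]$, $[1,gg']$ and $[g,gg']$, I split each segment at $m$:
$$ H(gg') = S([1,m]) + S([m,gg']) + B_1, $$
$$ H(g) = S([1,m]) + S([m,g]) + B_2, \qquad gH(g') = S([g,m]) + S([m,gg']) + B_3, $$
with $|B_i|\le 2n|e|$. The crucial cancellation is $S([m,g])+S([g,m])=0$ from orientation reversal. Subtracting, the terms $S([1,m])$, $S([m,gg'])$ and $S([m,g])$ all cancel, leaving $H(gg')-H(g)-gH(g')=B_1-B_2-B_3$, whence $\Delta(H)\le 6n|e|<\infty$.

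The only genuinely delicate point is keeping the orientation bookkeeping consistent: verifying the identification $gH(g')=S([g,gg'])$ and the sign in $S([y,x])=-S([x,y])$, together with the observation that all three splittings are along honest geodesics through $m$, so each $B_i$ is a clean straddle count rather than something involving the branching at $m$. The degenerate cases where $m$ equals $1$, $g$, or $gg'$ cause no trouble, as the corresponding segment is trivial with $S$-value $0$. Note that uniform convexity of $E$ plays no role here; only the isometric nature of $\rho$ is used.
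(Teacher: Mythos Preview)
Your proof is correct and follows essentially the same approach as the paper: both arguments hinge on the tripod/median of $1,g,gg'$ in the Cayley tree, observe that copies of $w$ entirely contained in one of the three legs cancel in $H(gg')-H(g)-gH(g')$, and bound the remaining contribution by the at most $6|w|$ copies straddling the center. Your packaging via the auxiliary functional $S$ and the additivity-with-boundary lemma is a clean reformulation of the paper's case-by-case cancellation, yielding the identical bound $\Delta(H)\le 6|w|\,|e|$.
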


\begin{proof}
This is the standard Brooks argument. Consider the tripod spanned by
$1,g,gf$. Call the central point $p$. We will see that contributions
of copies of $w$ in the tripod that do not cross $p$ cancel out
leaving only a bounded number of terms.

If $h\cdot w\orsubset [1,p]$ then $h(e)$ enters with positive sign in
$H(g)$ and in $H(gf)$, so it cancels in the expression
$H(gf)-H(g)$. Likewise, if $h\cdot w\orsubset [p,1]$ then $-h(e)$
enters both $H(g)$ and $H(gf)$, so it again cancels.

If $h\cdot w\orsubset [p,g]$ then $h(e)$ is a summand in $H(g)$. Since
$h\cdot w\orsubset [gf,g]$ we also have $g^{-1}h\cdot w\orsubset
[f,1]$, so $-g^{-1}h(e)$ is a summand in $H(f)$, and thus we have
cancellation in $-H(g)-gH(f)$. There is similar cancellation if
$h\cdot w\orsubset [g,p]$.

If $h\cdot w\orsubset [p,gf]$ or $[gf,p]$ then similarly to the
previous paragraph there is cancellation in $H(gf)-gH(f)$.

After the above cancellations in the expression $H(gf)-H(g)-gH(f)$ the
only terms left are of the form $\pm h(e)$ where $h(w)$ is contained
in the tripod and contains $p$ in its interior. The number of such
terms is clearly (generously) bounded by $6|w|$ so we deduce that
$\Delta(H)\leq 6|w|\|e\|$.
\end{proof}
\begin{remark}
Note that if $h\cdot w$ does not overlap $w$ for any $1\not=h \in F_2$, 
then $\Delta(H) \le 6\|e\|$. More generally, for a given $w$, write
$w=a^nb$ as a word such that $|b|<|a|$ and $n>0$ is maximal.
Then,  $\Delta(H) \le 6(n+1)\|e\|$.

\end{remark}

\begin{example}\label{ab}
Suppose $w=ab$. Then $H(a^n)=H(b^n)=0$, while
$H((ab)^n)=(1+ab+(ab)^2+\cdots+(ab)^{n-1})e\in E$. If the operator
$1-ab:E\to E$ has a continuous inverse (i.e. if $1\in\mathbb C$ is not in the
spectrum of $ab$) then $H$ is uniformly bounded on the powers of $ab$
since $(1-ab)H((ab)^n)=e-(ab)^n(e)$ has bounded norm. For example,
this happens even for $E=\R^2$ when $\rho(ab)$ is a (proper) rotation.

On the other
hand, for the representation $\ell^p(F_2)$ with $1\leq p<\infty$ and
with $e\in\ell^p(F_2)$ defined by $e(1)=1$, $e(g)=0$ for $g\neq 1$,
the quasi-cocycle $H$ is unbounded on the powers of $ab$.
\end{example}

\section{Nontriviality of quasi-cocycles}\label{nontrivial}
In Brooks' original construction of quasi-morphisms $F_2=\<a,b\>\to\R$
it is easy to see
that the quasi-morphisms are nontrivial. Choosing $w$ to be a reduced
word not of the form $a^m$ or $b^m$ it is clear that $H(w^n)$ will
be unbounded while $H(a^n)$ and $H(b^n)$ will be
zero. By this last fact if $G$ is a homomorphism that is boundedly
close to $H$ then $G$ must be bounded on powers of $a$ and $b$ and
therefore $G(a) = G(b) =0$. Since any homomorphism is determined by
its behavior on the generators we have $G \equiv 0$ and the
nontriviality of $H$ follows.

When the Brooks construction is extended to quasi-cocycles it is no
longer clear that the quasi-cocycle is nontrivial. In particular if $H
= H_{w,e}$ it may be that $H(w^n)$ is bounded. See Examples \ref{ab} and
\ref{U2}. In fact if $1$ is not in
the spectrum of $\rho(w)$ then $H(w^n)$ will be bounded for all
choices of vectors $e$. Even if $1$ is in the spectrum, when $e$ is
chosen arbitrarily $H(w^n)$ may be bounded. To show that the Brooks
quasi-cocycles are unbounded we will need to restrict to the class of
{\it uniformly convex} Banach spaces and to look at a wider class of
words than powers of $w$.

We will also have to work harder to show that a cocycle $G$ that is
bounded on powers of the generators is bounded everywhere. In fact we
cannot do this in general but instead will show that in a reflexive
Banach space (which includes uniformly convex Banach spaces) either
the cocycle is bounded or the original representation, when restricted
to a non-abelian subgroup, has an eigenvector. In this latter case it
is easy to construct many nontrivial quasi-cocycles.

\subsection{Uniformly convex and reflexive Banach spaces}

We will use basic facts about Banach spaces. General references are
\cite{bourbaki, megginson}.
The following concept was introduced by Clarkson \cite{clarkson}.

\begin{definition}
A Banach space $E$ is {\it uniformly convex} if for every $\epsilon>0$
there is $\delta>0$ such that $x,y\in E$, $|x|\leq 1$, $|y|\leq 1$,
$|x-y|\geq\epsilon$ implies $|\frac {x+y}2|\leq 1-\delta$.
\end{definition}

The original definition in \cite{clarkson} replaces $|x|,|y|\leq 1$
above with equalities, but it is not hard to see that the two are
equivalent. 

\begin{prop}\label{UC basic}
\begin{enumerate}[(i)]
\item $\ell^p$ spaces are uniformly convex for $1<p<\infty$
  (\cite{clarkson}).
  $\ell^1$ and $\ell^{\infty}$ spaces are not uniformly convex and not
  reflexive. 
\item A uniformly convex Banach space is reflexive (the Milman-Pettis
  theorem).
\item If $E$ is uniformly convex, then for any $R>0$ there are
  $\epsilon>0$ and $\mu>0$ so that the following holds. If $|v|\le R$
  and $f:E\to\R$ is a functional of norm 1 with $f(v)=|v|$ and if $e$
  is a vector of norm $\geq 1/2$ with $f(e)\geq -\mu$ then $|v+e|\geq
  |v|+\epsilon$.
\end{enumerate}
\end{prop}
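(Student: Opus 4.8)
The plan is to argue by contradiction: assume the hypotheses hold but $|v+e| < |v| + \epsilon$, and then derive a contradiction by showing $|e| < 1/2$, once $\epsilon$ and $\mu$ are chosen small enough in terms of $R$. First I would dispose of the case where $|v|$ is small, say $|v| \le 1/8$, using the triangle inequality alone: there $|v+e| \ge |e| - |v| \ge 1/2 - 1/8$, while $|v| + \epsilon \le 1/8 + \epsilon$, so the conclusion already holds as soon as $\epsilon \le 1/4$, using nothing about $f$ or uniform convexity. The remaining, genuine case is $1/8 \le |v| \le R$.

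In that range I would pass to unit vectors. Set $r = |v|$, $\rho = |v+e|$, $\hat v = v/r$, and $\hat w = (v+e)/\rho$; note $\rho \ge f(v+e) = r + f(e) \ge r - \mu > 0$ once $\mu < 1/8$, so $\hat w$ is defined. Since $\|f\| = 1$ and $f(v) = |v|$ we have $f(\hat v) = 1$, and the key computation is that $f(\hat w) = (r + f(e))/\rho$ is forced close to $1$: combining $f(e) \ge -\mu$ with the contradiction hypothesis $\rho < r + \epsilon$ and with $r \ge 1/8$ gives $f(\hat w) \ge (1/8 - \mu)/(1/8 + \epsilon)$, which tends to $1$ as $\epsilon, \mu \to 0$.

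Now uniform convexity enters, and this is the crucial step. From $f(\hat w)$ close to $1$ the midpoint satisfies $|(\hat v + \hat w)/2| \ge f((\hat v + \hat w)/2) = (1 + f(\hat w))/2$, which is close to $1$; the contrapositive of uniform convexity then forces $|\hat v - \hat w|$ to be small, quantitatively controlled by the modulus of convexity at the chosen level. Writing $e = \rho\hat w - r\hat v = \rho(\hat w - \hat v) + (\rho - r)\hat v$ and using $\rho \le R + 1$ together with $|\rho - r| \le \max(\epsilon, \mu)$, I would bound $|e| \le \rho|\hat w - \hat v| + \max(\epsilon,\mu)$, which can be made $< 1/2$ by fixing the target closeness $\epsilon_0 = 1/(4(R+1))$ for $|\hat v - \hat w|$, letting $\delta_0$ be the corresponding modulus supplied by uniform convexity, and finally choosing $\epsilon, \mu \in (0,1/8)$ small enough that $(1/8 - \mu)/(1/8 + \epsilon) > 1 - 2\delta_0$. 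This contradicts $|e| \ge 1/2$ and finishes the argument.

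The main obstacle is organizing the constants in the correct dependency order: $\epsilon_0$ must be dictated by the desired contradiction $|e| < 1/2$, then $\delta_0$ comes from uniform convexity, and only then are $\epsilon$ and $\mu$ chosen. Since $E$ may be infinite-dimensional one cannot extract a convergent subsequence, so the quantitative gap must be read off directly from the modulus of convexity rather than from compactness. A secondary subtlety is that no upper bound on $|e|$ is assumed, so one must observe that the contradiction hypothesis $|v+e| < |v|+\epsilon$ itself keeps $\rho$, and hence the reconstructed $|e|$, under control.
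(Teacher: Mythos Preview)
Your proposal is correct and follows essentially the same approach as the paper: dispose of small $|v|$ by the triangle inequality, then in the range $|v|\ge 1/8$ rescale into the unit ball, use $f$ to force the midpoint close to the sphere, and invoke uniform convexity to reach a contradiction. The only cosmetic difference is the normalization: the paper divides both $v$ and $v+e$ by the common factor $|v|+\epsilon$, so that $x-y=-e/(|v|+\epsilon)$ directly inherits the lower bound $|e|\ge 1/2$, and the contradiction comes immediately from comparing the two bounds on the midpoint; you instead normalize $v$ and $v+e$ separately to unit vectors, use the contrapositive of uniform convexity to make $|\hat v-\hat w|$ small, and then reconstruct $e=\rho\hat w-r\hat v$ to contradict $|e|\ge 1/2$. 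Both routes use exactly the same ingredients in the same dependency order; the paper's common-denominator trick just trims the final step.
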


\begin{proof} We only prove
  (iii). Choose $\delta \in (0,1)$ so that $|x|,|y|\leq 1$, $|x-y|\geq
  \frac 1{2(R+1)}$ implies $|\frac{x+y}2|\leq 1-\delta$. Then choose
  $\epsilon,\mu>0$ so that $\epsilon<\frac 18$ and $\frac{\frac
    18-\frac {\mu}2}{\frac 18+\epsilon}>1-\delta$. 
  Suppose $f,v,e$ satisfy the assumptions but
  $|v+e|<R+\epsilon$. If $|v| \le 1/8$ then $|v + e| \ge |e| - |v| \ge
  1/4 \ge |v| + 1/8$ and we are done. So assume that $|v| >1/8$. Then
  for $x=\frac v{|v|+\epsilon}$, $y=\frac {v+e}{|v|+\epsilon}$ we have
  $|x|,|y|\leq 1$ and $|x-y|\ge \frac1{2(|v|+1)} \geq\frac 1{2(R+1)}$, so
  we must have $|\frac {x+y}2|\leq 1-\delta$. Thus
  $$1-\delta\geq
  \bigg|\frac{x+y}2\bigg|=\bigg|\frac{v+e/2}{|v|+\epsilon}\bigg|\geq
  \frac {|v|-\frac{\mu}2}{|v|+\epsilon}\geq\frac{\frac
    18-\frac{\mu}2}{\frac 18+\epsilon}$$ since
  $f(v+e/2)=|v|+f(e)/2\geq |v|-\frac{\mu}2$ and $|f|=1$. This
  contradicts the choice of $\mu,\epsilon$.
\end{proof}

\begin{lemma}\label{fixed points}
Let $\rho$ be a unitary representation of a group $F$ on a reflexive
Banach space $E$. If there is a linear functional $f$ and a vector $e
\in E$ such that the $F$-orbit of $e$ lies in the half space $\{f \ge
\mu\}$ with $\mu >0$ then there is an $F$-invariant vector $e'
\neq 0 \in E$ and a $F$-invariant functional $\phi$ with $\phi(e') \ge
\mu$.  If $e$ is $F$-invariant, then we can take $e'=e$.
\end{lemma}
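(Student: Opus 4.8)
The plan is to realize an invariant vector as a common fixed point of the isometric action on a suitable weakly compact convex set, and then to obtain the invariant functional by running the same argument for the dual (contragredient) representation.

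First I would form $C$, the norm-closed convex hull of the orbit $\{\rho(g)e : g\in F\}$. Because $\rho$ is unitary every orbit point has norm $\|e\|$, so $C$ is bounded; being norm-closed and convex it is weakly closed (Mazur), and since $E$ is reflexive a bounded weakly closed set is weakly compact. The set $C$ is $F$-invariant: each $\rho(g)$ is a linear isometry, hence permutes the orbit, preserves convex combinations, and (being a homeomorphism) preserves the closure. The half-space hypothesis $f(\rho(g)e)\ge\mu$ passes to $C$ by linearity and continuity of $f$, so $f\ge\mu$ on $C$; in particular $0\notin C$ and every point of $C$ is nonzero.

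Next I would produce a common fixed point in $C$. Each $\rho(g)$ restricts to an affine, weak-weak continuous self-map of the weakly compact convex set $C$, and the family $\{\rho(g)|_C\}$ is noncontracting: for $x\neq y$ in $C$ one has $\|\rho(g)x-\rho(g)y\|=\|x-y\|>0$ for all $g$. The Ryll-Nardzewski fixed point theorem then yields a common fixed point $e'\in C$, which is the desired $F$-invariant vector; since $f(e')\ge\mu>0$ we have $e'\neq 0$. If $e$ is already $F$-invariant the orbit is the single point $e$, so $C=\{e\}$ and $e'=e$, as required. The main obstacle is exactly this fixed-point step: in a merely reflexive space the norm need not be strictly convex, so the naive choice of the minimal-norm element of $C$ (which would be canonical, hence automatically invariant, in the uniformly convex case) need not be unique nor fixed; and since $F$ is free it is non-amenable, so one cannot average over the orbit either. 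Ryll-Nardzewski is precisely the tool that replaces strict convexity or amenability with the automatic noncontractiveness of an isometric action.

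Finally, for the invariant functional I would apply the vector statement to the dual. The dual $E^*$ is reflexive and the contragredient representation $\rho^*(g)\phi=\phi\circ\rho(g)^{-1}$ is again unitary. For the orbit of $f$ we compute $(\rho^*(g)f)(e')=f(\rho(g)^{-1}e')=f(e')\ge\mu$, using invariance of $e'$; thus the $\rho^*$-orbit of $f$ lies in the half-space $\{\Phi\in E^*:\Phi(e')\ge\mu\}$ cut out by evaluation at $e'$, which is a genuine functional on $E^*$ by reflexivity. Applying the vector part to $(E^*,\rho^*)$ with this functional and the vector $f$ produces an $F$-invariant $\phi\in E^*$ lying in the closed convex hull of the $\rho^*$-orbit of $f$; in particular $\phi(e')\ge\mu$, which completes the proof.
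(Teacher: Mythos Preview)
Your proof is correct and follows essentially the same route as the paper's: take the (weakly) closed convex hull of the orbit, apply Ryll--Nardzewski to obtain the invariant vector $e'$, then dualize and repeat with the contragredient action on $E^*$ using evaluation at $e'$ as the separating functional. Your write-up is more detailed (explicitly invoking Mazur's theorem, checking the noncontracting hypothesis, and spelling out the contragredient), but the argument is the same.
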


\begin{proof}
Let $\Lambda$ be the convex hull of the $F$-orbit of $e$ in the weak
topology on $E$. Since $E$ is reflexive, $\Lambda$ is weakly
compact. The convex hull $\Lambda$ is also $F$-invariant so by the
Ryll-Nardzewski fixed point theorem it will contain an $F$-invariant
vector $e'$. Since $e' \in \Lambda$, $f(e') \ge \mu$ and therefore
$e' \neq 0$.

Since $e'$ is a functional on the reflexive Banach space $E^*$ and the $F$-orbit of $f$ will be contained in the half space $\{e' \ge \mu\}$ we similarly get a $F$-invariant vector $\phi \in E^*$ with $e'(\phi) = \phi(e') \ge \mu$.
\end{proof}

Note that if $E$ contains a nonzero vector that is $F$-invariant, then
the Hahn-Banach theorem supplies a functional that satisfies the
conditions of the lemma and so there is also a nonzero $F$-invariant
functional.

\subsection{Detecting unboundedness}

\begin{lemma}\label{unbounded}
Let $\rho$ be any unitary representation of $F_2 = \langle a,b\rangle$
into a uniformly convex Banach space $E$. Then one of the following
holds:
\begin{enumerate}[(i)]
\item for every $e\neq 0 \in E$ and any $1\neq w\in F_2$ not of the form
  $a^mb^n$ nor $b^ma^n$ the quasi-cocycle $H=H_{w,e}$ is unbounded on
  $F_2$, or
\item there is a free subgroup $F\subset F_2$ with $F\cong F_2$, a linear
  functional $g$, a vector $e$ and a $\mu>0$ such that the
  $F$-orbit of $e$ is contained in the half-space $\{g \le -\mu\}$.
  In particular, there is an $F$-invariant vector $e' \not=0$ in the half space.
\end{enumerate}
\end{lemma}

\begin{proof}
We first make some observations about words in $F_2$. Given a word $w$
as in (i) we can find buffer words $B$ and $B'$ of the form $a^\ell b^\ell$
or $b^\ell a^\ell$ and a subgroup $F = \langle a^m, b^m \rangle$ with $m
>>\ell, |w|$ such that if $w' = BwB'$ and $y_1, y_2, \dots, y_n \in F$ then in
the reduced word for the element $x=y_1 w' y_2 w' \cdots y_n w'$ there
is exactly one copy of $w$ for each $w'$ and no other copies of either
$w$ or $w^{-1}$. Note that the word $y_1 w' y_2 w' \cdots y_n w'$ may
not be reduced and in its reduced version there may be cancellations
in the $w'$. However, the buffer words will prevent these
cancellations from reaching $w$. The restrictions on $w$ ensure that
$w$ does not appear as a subword of some $y_i$. In particular,
$|H(w')|=|e|$ and $H(xyw')=H(x)+xH(yw')=H(x)+xyH(w')$ for any $y\in F$.

For simplicity, normalize so that $|e|=1$, so $|H(w')|=1$. Assume that (ii) doesn't
hold, and that $H$ is bounded on $F_2$. Let 
$F_w$ be the set of words of the
form 
$$y_1 w' y_2 w' \cdots y_n w' , (y_i  \in F)$$ and let $R= \underset{x\in
  F_w}{\sup} |H(x)|<\infty$. Let $\epsilon,\mu>0$ be as in Proposition
\ref{UC basic}(iii). Choose an $x\in F_w$ such that $|H(x)| > R -
\epsilon$. We will find a $y \in F$ such that $|H(xyw')| > R$ to
obtain a contradiction since $xyw' \in F_w$.

Let $\phi$ be a linear functional of norm 1 such that $\phi(H(x)) =
|H(x)|$. Let $\psi = \phi\circ x$. Since (ii) doesn't hold, there
exists a $y \in F$ with $\psi(yH(w')) > -\mu$. (We are applying the
negation of (ii) not to $e$ but to $H(w')$, which is in the
$F_2$-orbit of $e$, but it is easy to see that this follows from the
corresponding fact for $e$ by replacing $F$ with a conjugate.)
So, $\phi(xyH(w')) > -\mu$.
Then
by Proposition \ref{UC basic}(iii), $|H(xyw')| = |H(x) + xyH(w')| \ge
|H(x)| + \epsilon > R$, contradiction.

For an $F$-invariant vector in (ii), see the proof of Lemma \ref{fixed points}.
\end{proof}

We give an application of Lemma \ref{unbounded}.
\begin{example}\label{U2}
Choose an embedding $\rho:F_2\subset U(2)$ so that every nontrivial element is
conjugate to a matrix of the form
\[
\left(\begin{matrix}
e^{2\pi it}&0\\
0&e^{2\pi is}\end{matrix}\right)
\]
with $t,s,\frac ts$ all irrational. (Such representations can be
constructed by noting that they form the complement of
countably many proper subvarieties in $Hom(F_2,U(2))$.)
Put $E={\Bbb C}^2$.

Then any $H=H_{w,e} $ with $0\not=e\in E, 1\not=w\in F_2$ is
bounded on any cyclic subgroup, but many are globally unbounded. 
The second statement
follows by noting that the orbit of any unit vector under a nontrivial
cyclic subgroup is dense in a torus $S^1\times S^1\subset \mathbb
C^2$, so (ii) of Lemma \ref{unbounded} fails, and (i) must hold. 
 For the first statement, observe that for a fixed $g\in F$
the
values $H(g^n)$ can be computed, up to a bounded error, by adding sums of
the form
$$U_n=u(e)+gu(e)+\cdots+g^{n-1}u(e)$$
one for every $g$-orbit of occurrences of $w$ or $w^{-1}$ along the axis of
$g$. 
Applying $g$ we have 
$$g(U_n)=gu(e)+\cdots +g^nu(e)$$ and so $|g(U_n)-U_n|\leq 2|e|$, which
implies that $|U_n|$ is bounded, since $g:\mathbb C^2\to\mathbb C^2$
moves every unit vector a definite amount. It follows $H(g^n)$ is bounded on $n$.
This gives an isometric
quasi-action of $F_2$ on $\mathbb C^2$ or $\R^4$ with unbounded
orbits, but with every cyclic subgroup having bounded orbits.

In fact, since $H^1(F_2;\rho)\neq 0$, it follows that there are {\it
  isometric} actions of $F_2$ on $\R^4$ with unbounded orbits and with
every element fixing a point.
\end{example}

The following is our basic method of detecting bounded cocycles. In
the presence of reflexivity of the Banach space, bounded isometric
actions have fixed points. Thus a cocyle $G:F_2\to E$ is bounded if
and only if for some $v\in E$ (a fixed point of the action) we have
$G(g)=v-\rho(g)v$ for every $g\in F_2$.

\begin{lemma}\label{bounded cocycle 2}
Let $\rho$ be a unitary representation of $F_2$ on a reflexive Banach space $E$
and $G$ a cocycle that is bounded on $\langle a^2,b \rangle$ and
$\langle a^3, b \rangle$. Then one of the following holds.
\begin{enumerate}[(i)]
\item $G$ is bounded on $F_2$, or

\item There is a free subgroup $F\subset F_2$ with $F\cong F_2$ such that
  $\rho|_F$ fixes a nonzero vector in $E$.
  \end{enumerate}
  \end{lemma}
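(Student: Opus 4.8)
The plan is to translate everything into the language of the induced affine isometric action $g\cdot x=\rho(g)x+G(g)$ and to use the reflexive fixed–point property recorded just before the statement: a subgroup $H\le F_2$ fixes a point of $E$ if and only if $G$ is bounded on $H$. Thus the two hypotheses say exactly that $Fix(\langle a^2,b\rangle)$ and $Fix(\langle a^3,b\rangle)$ are nonempty. I would fix once and for all a point $v\in Fix(\langle a^2,b\rangle)$ and a point $w\in Fix(\langle a^3,b\rangle)$, so that $a^2\cdot v=v=b\cdot v$ and $a^3\cdot w=w=b\cdot w$.

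The whole argument then turns on the single observation that $a^6=(a^2)^3=(a^3)^2$ lies in both $\langle a^2\rangle$ and $\langle a^3\rangle$. Consequently $v$ and $w$ are each fixed by $a^6$ and by $b$. I would then split into two cases according to whether $Fix(\langle a^2,b\rangle)\cap Fix(\langle a^3,b\rangle)$ is empty. If it is nonempty, any point $x$ in the intersection is fixed by $\langle a^2,b\rangle$ and by $\langle a^3,b\rangle$, hence by the group they generate, namely $\langle a^2,a^3,b\rangle=\langle a,b\rangle=F_2$; then $G$ is bounded on $F_2$ and conclusion (i) holds.

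If the intersection is empty, then $u:=v-w\neq0$, and I claim $u$ is fixed by the linear parts $\rho(a^6)$ and $\rho(b)$, which yields (ii) with the rank–two free subgroup $F=\langle a^6,b\rangle$. To see the invariance, write out the affine fixed–point equations $\rho(a^6)v+G(a^6)=v$ and $\rho(a^6)w+G(a^6)=w$, which are valid precisely because both $v$ and $w$ are $a^6$–fixed, and subtract: the common term $G(a^6)$ cancels and leaves $\rho(a^6)u=u$. Subtracting $\rho(b)v+G(b)=v$ from $\rho(b)w+G(b)=w$ likewise gives $\rho(b)u=u$. Hence $\rho|_{\langle a^6,b\rangle}$ fixes the nonzero vector $u$, as required.

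I expect the only genuinely delicate points to be bookkeeping rather than analysis. One must check that the cocycle terms really do cancel in the subtraction, which is exactly why it is essential that $a^6$ lie in \emph{both} cyclic groups, so that $v$ and $w$ satisfy the \emph{same} affine equation for $a^6$; and one must note that $\langle a^6,b\rangle$ is free of rank two. It is worth emphasizing that uniform convexity is never invoked here: the argument uses only reflexivity, through the fixed–point property, which matches the stated hypothesis of the lemma.
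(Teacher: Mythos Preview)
Your proof is correct and follows essentially the same route as the paper: translate to the affine action, use reflexivity (Ryll--Nardzewski) to produce fixed points for $\langle a^2,b\rangle$ and $\langle a^3,b\rangle$, and exploit the common power $a^6$ together with $b$ to either get a global fixed point (case (i)) or a nonzero $\rho$-invariant vector for $\langle a^6,b\rangle$ (case (ii)). Your two-case split is in fact slightly cleaner than the paper's, which separately treats the sub-case where one of the fixed-point sets is not a singleton before arriving at the same $\langle a^6,b\rangle$ conclusion.
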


\begin{proof}
The cocycle $G$ induces an action of $F_2$ on $E$ by affine isometries
and the image of $G$ is the orbit of $0$ under this action. If the
restriction of this action to $\langle a^2, b \rangle$ is bounded
(with respect to the norm topology) then the convex hull of the orbit
(in the weak topology) will be $\langle a^2, b \rangle$-invariant and
compact since $E$ is reflexive so by the Ryll-Nardzewski fixed point
theorem $\langle a^2,b \rangle$ will have a fixed point. Thus
$Fix(a^2)\cap Fix(b)\neq\emptyset$. If this intersection is not a
single point then (ii) holds since $\rho$ restricted to $F=\langle
a^2,b\rangle$ fixes the difference of any two vectors in the
intersection. Similarly, (ii) holds if $Fix(a^3)\cap
Fix(b)\neq\emptyset$ is not a single point. If the two intersections
coincide then the intersection point is fixed by both
$a=a^3(a^2)^{-1}$ and $b$, so $G$ is bounded. If the intersections are
distinct then $F=\langle a^6,b\rangle$ fixes two distinct points, so
(ii) holds.
\end{proof}

\subsection{Detecting essentiality and proof of Theorem \ref{main:free2}}

We now show that under suitable conditions our quasi-cocycles are
essential. We consider two cases. If there is a free subgroup that
fixes a nonzero vector $e\in E$, the argument essentially goes back to
Brooks, since in this case we restrict to the trivial
representation. This case is presented first.

\begin{prop}\label{fixed vector:free}
Let $\rho$ be a unitary representation of $F_2$ in a reflexive Banach
space $E$ and let $F$ be a rank two free subgroup such that $\rho|_F$
has an invariant vector $e\neq 0$. Let $w'$ be a cyclically reduced
word that is conjugate into $F$. Then quasi-cocycles of the form
$H_{w, e}$ where $w$ is a reduced word that contains $w'$ as a subword
span an infinite dimensional subspace of
$\widetilde{QC}(F_2;\rho)$.
\end{prop}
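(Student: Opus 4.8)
The plan is to reduce the vector-valued problem to the classical scalar Brooks theory on the free subgroup $F$ by pairing with an invariant functional. Since $\rho|_F$ fixes $e\neq 0$, the remark following Lemma~\ref{fixed points} (Hahn--Banach together with the Ryll--Nardzewski argument) supplies an $F$-invariant functional $\phi\in E^*$ with $\phi(e)>0$; normalize so that $|e|=1$ and $\phi(e)=1$. For a reduced word $w$ containing $w'$ as a subword, let $H=H_{w,e}$ be the quasi-cocycle of Proposition~\ref{free-Brooks} and define $q_w:=\phi\circ H|_F\colon F\to\R$. I would first check that $q_w$ is a genuine quasimorphism on $F$: for $g,g'\in F$ we have $\phi(\rho(g)H(g'))=(\phi\circ\rho(g))(H(g'))=\phi(H(g'))$ by $F$-invariance of $\phi$, so applying $\phi$ to $H(gg')-H(g)-\rho(g)H(g')$ gives $|q_w(gg')-q_w(g)-q_w(g')|\le \|\phi\|\,\Delta(H)$. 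The same computation applied to a cocycle $C$ (using the cocycle identity and $F$-invariance of $\phi$) shows $\phi\circ C|_F$ is an honest \emph{homomorphism} $F\to\R$. Consequently, if $\sum_k c_k H_{w_k,e}=C+\beta$ with $C$ a cocycle and $\beta$ bounded, then restricting to $F$ and applying $\phi$ yields $\sum_k c_k q_{w_k}=\phi\circ C|_F+\phi\circ\beta|_F$, i.e.\ $\sum_k c_k q_{w_k}$ lies a bounded distance from a homomorphism. Thus linear independence of the classes $[q_{w_k}]$ in $\widetilde{QH}(F)$ forces all $c_k=0$, so it suffices to exhibit an infinite family $w_k$ for which the $q_{w_k}$ are linearly independent in $\widetilde{QH}(F)$.

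Next I would build such a family inside $F$ itself. Since $w'$ is cyclically reduced and conjugate into $F$, it is a cyclic conjugate of some $f_0\in F$ and hence appears as a subword of $f_0^2\in F$. I would therefore choose each $w_k$ to be a reduced word representing an element of $F$ that contains the fixed core $f_0^2$ (hence contains $w'$), decorated with long, pairwise-distinct tags built from high powers of the free generators of $F$ so that each $w_k$ is self-overlap-free and no $w_k$ embeds into a power of $w_j$ for $j\neq k$, in the manner of Epstein--Fujiwara. The point of taking $w_k\in F$ is that \emph{$F$-invariance of $e$ turns the weights into $1$}: for $g\in F$,
$$q_w(g)=\sum_{h\in w_+(g)}\phi(\rho(h)e)-\sum_{h\in w_-(g)}\phi(\rho(h)e),$$
and whenever the occurrence position $h$ lies in $F$ we have $\rho(h)e=e$ and $\phi(\rho(h)e)=\phi(e)=1$, while in general each weight lies in $[-\|\phi\|,\|\phi\|]$. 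Evaluating on $g=w_k^n$, the positive occurrences of $w_k$ sit at $h=1,w_k,\dots,w_k^{n-1}\in F$, each contributing $+1$, and there are no occurrences of $w_k^{-1}$, so $q_{w_k}(w_k^n)=n+O(1)$; for $j\neq k$ the overlap control bounds the number of occurrences of $w_j$ in $w_k^n$ independently of $n$, whence $q_{w_j}(w_k^n)=O(1)$. Passing to the homogenization $\bar q(g)=\lim_n q(g^n)/n$ gives the diagonal pairing $\bar q_{w_j}(w_k)=\delta_{jk}$, which is exactly the Epstein--Fujiwara/Bestvina--Fujiwara criterion for linear independence in $\widetilde{QH}(F)$. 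To also kill the homomorphism ambiguity, I would arrange $w_k\in[F,F]$ (achievable by appending a reduced tail of vanishing generator-exponent sum without disturbing the core or the overlap properties); then any homomorphism $\chi\colon F\to\R$ satisfies $\chi(w_k)=0$, so the relation $\sum_k c_k\bar q_{w_k}=\chi$ evaluated at $w_k$ gives $c_k=\chi(w_k)=0$.

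I expect the main obstacle to be the combinatorial bookkeeping in the previous step rather than any analytic point: every $w_k$ necessarily shares the common subword $w'$ (indeed the whole core $f_0^2$), so copies of $w'$ proliferate along all geodesics and one must ensure that the \emph{full} words $w_k$ nonetheless have the required self- and mutual-non-overlap behavior, that occurrences of $w_k$ inside $w_k^n$ remain $F$-aligned so the weights are genuinely $1$ (and not some value in $(-1,1)$), and that these constraints coexist with the requirement $w_k\in[F,F]$. The distinct high-power tags are what decouple the words despite the shared core, and the buffering idea already used in Lemma~\ref{unbounded} is the right tool to prevent cancellations from reaching $w'$ in non-reduced products; making all of these compatible simultaneously is the technical heart of the argument. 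Once the family is fixed, the reflexivity of $E$ enters only through the existence of the invariant functional $\phi$, and uniform convexity is not needed here, consistent with the statement being over a general reflexive $E$.
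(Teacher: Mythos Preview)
Your approach is essentially the same as the paper's: both pair with an $F$-invariant functional $\phi$ (obtained from Lemma~\ref{fixed points}) to reduce $H_{w,e}|_F$ to a scalar quasi-morphism on $F$, note that $\phi\circ G|_F$ is a genuine homomorphism for any cocycle $G$, and then construct an infinite family $w_k\in F$ containing $w'$ whose associated quasi-morphisms are linearly independent in $\widetilde{QH}(F)$.

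The difference is in the bookkeeping for linear independence. The paper conjugates $F$ so that $w'\in F$, picks cyclically reduced generators $\alpha,\beta$ of $F$, and sets $w_k=w'\alpha^k\beta^k\alpha^k\beta^k$ for $k\ge k_0$. Then each $\phi\circ H_k$ vanishes on powers of $\alpha$ and $\beta$, so any nearby homomorphism on $F$ is forced to be identically zero (no need for $w_k\in[F,F]$); and for $i<k$ the word $w_i$ simply never occurs along the relevant geodesics, so $\phi\circ H(w_k^n)\ge n\mu$ gives the contradiction directly. Your route via homogenization, a diagonal pairing $\bar q_{w_j}(w_k)=\delta_{jk}$, and the extra constraint $w_k\in[F,F]$ is correct but heavier: you must simultaneously arrange self-overlap-freeness, mutual non-embedding into powers, $F$-alignment of occurrences, and vanishing abelianization, whereas the paper's explicit choice of $w_k$ handles all of this at once and kills homomorphisms by the vanishing-on-generators trick rather than the commutator trick. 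One small point to tighten in your write-up: ``$w'$ is a cyclic conjugate of some $f_0\in F$'' is not quite what conjugacy into $F$ gives you; the paper's move of replacing $F$ by a conjugate so that $w'\in F$ is the clean way to make the subsequent construction go through.
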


The word $w'$ can be empty and that is the case we use later. 
\begin{proof}
After possibly conjugating $F$ we can assume that $w'$ is contained in
$F$. Since $w'$ is cyclically reduced its axis contains the identity
in the Cayley graph for $F_2$. This implies that the minimal $F$-tree
also
contains the identity and allows us to find cyclically reduced words
$\alpha$ and $\beta$ in $F$ such that the concatenation
$$w_k = w'\alpha^k \beta^k\alpha^k\beta^k$$ is cyclically
reduced. Furthermore we can assume that $\alpha$ and $\beta$ generate
$F$. Let $H_k=H_{w_k,e}$. By Lemma \ref{fixed points} there exists an
$F$-invariant (continuous) linear functional $\phi$ with $\phi(e) \ge
\mu>0$. 

Then  the restriction to $F$ of the
composition $\phi \circ G$ with any co-cycle $G$ is a homomorphism,
and similarly the restriction of the composition $\phi \circ H$ to
$F$ with any quasi-co-cycle $H$ is a quasi-morphism.

We will
show that the sequence $H_{k_0},H_{k_0+1},\cdots$ represents linearly
independent elements in $\widetilde {QC}(F_2;\rho)$ for $k_0$ so large
that all powers of $\alpha$ and $\beta$ appearing in $w'$ are $<<k_0$
in absolute value. 
% It suffices to show that the restrictions of the 
%quasi-co-cycle $H_i$ to
%$F$ represent linearly independent quasi-morphisms on $F$. 
Indeed, if
$H=H_k-c_{k_0}H_{k_0}-\cdots -c_{k-1}H_{k-1}$, with $k_0 <k$,  for any constants $c_i$
then the quasi-morphism $\phi \circ H$ on $F$ is 0 on the powers of $\alpha$
and $\beta$, so if a co-cycle $G$ is boundedly close $H$, then the homomorphism $\phi
\circ G$ on $F$  must be bounded, and therefore zero, on powers of $\alpha$
and $\beta$. Therefore $\phi \circ G$ is trivial when restricted to
$F$. On the other hand a  staightforward calculation shows that $\phi
\circ H(w^n_k) \ge n \mu$
so $\phi \circ H$ is unbounded on $F$ and $H$ and $G$ cannot be boundedly close.
We showed that $H$ is non-trivial in $\widetilde {QC}(F_2;\rho)$,
so $H_{k_0},H_{k_0+1},\cdots, H_k$ are linearly independent. 

\end{proof}

We now consider the opposite case when no reduction to the trivial
representation is possible.

\begin{prop}\label{no fixed vector:free}
Let $\rho$ be a unitary representation of $F_2= \langle a, b \rangle$
on a uniformly convex Banach space and assume that no nonabelian
subgroup of $F_2$ fixes a nonzero vector. Then for any fixed $e\neq 0$
and a cyclically reduced word $w'$ the quasi-cocycles of the form
$H_{w, e}$ span an infinite dimensional subspace of
$\widetilde{QC}(F_2;\rho)$, where $w$ range over cyclically reduced
words that contain $w'$ as a subword.
\end{prop}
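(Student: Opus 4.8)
The plan is to combine the unboundedness input from Lemma \ref{unbounded} with the cocycle-rigidity input from Lemma \ref{bounded cocycle 2}, exactly paralleling the structure of the fixed-vector case (Proposition \ref{fixed vector:free}) but without the luxury of an invariant functional to pass to. First I would record that, since no nonabelian subgroup of $F_2$ fixes a nonzero vector, alternative (ii) of Lemma \ref{unbounded} cannot hold for any free subgroup $F\cong F_2$ (its conclusion produces an $F$-invariant nonzero vector). Therefore alternative (i) holds, and for every $e\neq 0$ and every reduced word not of the form $a^mb^n$ or $b^ma^n$ the Brooks quasi-cocycle $H_{w,e}$ is \emph{unbounded} on $F_2$. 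This gives nontriviality of each individual $H_{w,e}$ once I know that no bounded cocycle can be boundedly close to it.

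The nontriviality of a single class is where Lemma \ref{bounded cocycle 2} enters. Suppose a cocycle $G$ were a bounded distance from some $H=H_{w,e}$. I would like to conclude $G$ is bounded on enough subgroups to force $G$ bounded on all of $F_2$, contradicting unboundedness of $H$. The hypothesis of Lemma \ref{bounded cocycle 2} asks that $G$ be bounded on $\langle a^2,b\rangle$ and $\langle a^3,b\rangle$; by the remark preceding that lemma, boundedness on a subgroup is equivalent to that subgroup having a fixed point for the affine action. Since alternative (ii) of Lemma \ref{bounded cocycle 2} again produces an $F$-invariant vector and is excluded by hypothesis, its alternative (i) must hold: $G$ bounded on those two subgroups forces $G$ bounded on $F_2$. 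So the real task reduces to choosing the words $w$ so that $H_{w,e}$ (and hence any $G$ boundedly close to it) is automatically bounded on $\langle a^2,b\rangle$ and $\langle a^3,b\rangle$, while $H_{w,e}$ is globally unbounded.

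To arrange this I would imitate the buffer-word construction of Lemma \ref{unbounded} and Proposition \ref{fixed vector:free}: fix a subgroup $F=\langle a^m,b^m\rangle$ with $m$ large and build, from the given $w'$, a family of cyclically reduced words $w_k$ containing $w'$ whose only translates appearing along the axes of elements of $\langle a^2,b\rangle$ or $\langle a^3,b\rangle$ can be controlled. The key point to verify is that $w_k$ contains no copy of $w_k^{\pm 1}$ as a subword of any element of these two special subgroups, so that $H_{w_k,e}$ evaluated along $\langle a^2,b\rangle$ and $\langle a^3,b\rangle$ is a \emph{bounded} function (indeed its values there are controlled by a fixed finite count of crossings, independent of the group element), hence $G$ boundedly close to $H_{w_k,e}$ is bounded on those subgroups. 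Then the two inputs combine: $G$ bounded on $\langle a^2,b\rangle,\langle a^3,b\rangle$ gives $G$ bounded on $F_2$ by Lemma \ref{bounded cocycle 2}, contradicting the unboundedness of $H_{w_k,e}$ from Lemma \ref{unbounded}. Thus each $H_{w_k,e}$ is essential.

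Finally, for linear independence I would argue as in Proposition \ref{fixed vector:free}, but since there is now no invariant functional $\phi$ to linearize with, the bookkeeping is the main obstacle. The idea is to choose the $w_k$ so that their crossing patterns are ``staircased'': evaluating $H_{w_k,e}$ along powers of a suitably chosen element of $F$ detects the index $k$ (for instance by making the word $w_{k+1}$ contain a longer $\alpha^k\beta^k$ block than $w_k$, so that the unboundedness of $H_{w_k,e}$ is realized on a sequence of elements on which $H_{w_j,e}$ for $j\neq k$ is bounded). A finite linear combination $H=\sum c_k H_{w_k,e}$ that is boundedly close to a cocycle $G$ would then, by the argument of the previous paragraph applied to each adapted sequence, force all $c_k=0$. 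The delicate part is that, lacking the scalar reduction $\phi\circ H$, the cancellation argument must be carried out directly in $E$ using uniform convexity (Proposition \ref{UC basic}(iii)) to see that distinct $H_{w_k,e}$ grow in ``independent directions'' along their respective exhausting sequences; this is the step I expect to require the most care.
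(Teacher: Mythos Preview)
Your strategy is the paper's: rule out alternative (ii) of Lemma \ref{unbounded} and of Lemma \ref{bounded cocycle 2} by hypothesis, and engineer the words so that each $H_{w_k,e}$ vanishes on $\langle a^2,b\rangle$ and $\langle a^3,b\rangle$. Where you diverge is in the last paragraph, and there you are making the argument harder than it is.

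The paper takes $w_m = w'\,a^{5m}b^{5m}a^{7m}b^{7m}$ with $\gcd(m,6)=1$. The $a$-exponents $5m,7m$ are then neither even nor divisible by $3$, so $w_m$ cannot occur as a subword of any reduced element of $\langle a^2,b\rangle$ or $\langle a^3,b\rangle$; hence $H_m:=H_{w_m,e}$ is not merely bounded but \emph{identically zero} on those two subgroups. The same mechanism handles linear independence with no further analytic input: for $i<m$ the word $w_i$ (whose $a$- and $b$-blocks are strictly shorter) cannot occur inside any element of the set $F_{w_m}$ from the proof of Lemma \ref{unbounded}, so each $H_i$ vanishes identically on $F_{w_m}$. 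Thus a combination $H = H_m - \sum_{i<m} c_i H_i$ agrees with $H_m$ on $F_{w_m}$ and is therefore unbounded there, while $H$ still vanishes on $\langle a^2,b\rangle$ and $\langle a^3,b\rangle$; any cocycle boundedly close to $H$ is then globally bounded by Lemma \ref{bounded cocycle 2}(i), a contradiction.

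So your ``staircasing'' instinct is exactly right, but the payoff is stronger than you anticipated: with the correct exponents you get outright vanishing rather than mere boundedness, and the linear combination literally equals its top term on the relevant set. No separate uniform-convexity argument for ``independent directions'' is needed, and you should drop that step from your plan.
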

The word $w'$ can be empty, and that is the case we use later. 
\begin{proof}
We assume that $w'$ does not start with
$b^{-1}$ or end with $a^{-1}$ (otherwise, swap $a$ and $b$ in 
the definition of $w_m$). Let $w_m=w'a^{5m}b^{5m}a^{7m}b^{7m}$,
$m\geq 1$, and $gcd(m,6)=1$. By Lemma \ref{unbounded}, $H_m
=H_{w_m,e}$ is unbounded. Furthermore $H_m$ is 0 on the subgroups
$\langle a^2,b\rangle$ and $\langle a^3,b\rangle$
listed in Lemma \ref{bounded cocycle 2}.

We claim that those $H_m$'s are linearly independent in $\widetilde{QC}(F_2;\rho)$. 
Fix $m$ and let
$H=H_m-\sum_{i<m}c_iH_i$ for constants $c_i$. Then $H$ is also
unbounded, since the $H_i$ for $i<m$ are visibly 0 on all words in
$F_{w_m}$, the set given in  the proof of Lemma \ref{unbounded},
 but $H_m$ is unbounded on
$F_{w_m}$.
$H$ is bounded on $\langle a^2,b\rangle$ and $\langle a^3,b\rangle$.

Suppose $H$ differs from a cocycle $G$ by
a bounded function. Then $G$ is also bounded on the subgroups
$\langle a^2,b\rangle$ and $\langle a^3,b\rangle$, therefore
$G$ is bounded on $F_2$ since (i) must holds in Lemma \ref{bounded cocycle 2}.
So, $H$ is bounded on $F_2$, contradiction. 
We showed that $H_i, i\le m$ are linearly independent in $\widetilde{QC}(F_2;\rho)$.
\end{proof}

Theorem \ref{main:free2} now follows immediately.
If there is a rank two free subgroup $F$ in $F_2$
with an $F$-invariant vector $e\not=0$, then use Proposition \ref{fixed vector:free} with $w'$ empty to produce an infinite dimensional subspace. 
Otherwise, use Proposition \ref{no fixed vector:free} with $w'$ empty.

\begin{thm}\label{main:free}
Let $\rho$ be a unitary representation of $F_2$ on a uniformly convex
Banach space $E\neq 0$.
Then
$\dim\widetilde{QC}(F_2;\rho)=\infty$. 
\end{thm}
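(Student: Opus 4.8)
The plan is to reduce the theorem to the two cases already handled by Propositions \ref{fixed vector:free} and \ref{no fixed vector:free}, which between them exhaust all possibilities. The organizing dichotomy is whether or not some rank two free subgroup $F \subset F_2$ (with $F \cong F_2$) fixes a nonzero vector of $E$. Since uniform convexity implies reflexivity by the Milman--Pettis theorem (Proposition \ref{UC basic}(ii)), the reflexivity hypotheses of both propositions are automatically in force.

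First I would dispose of the case in which such an $F$ exists. Here I apply Proposition \ref{fixed vector:free} directly, taking the word $w'$ to be empty and letting $e \neq 0$ be the $F$-invariant vector. That proposition then produces quasi-cocycles of the form $H_{w,e}$ spanning an infinite dimensional subspace of $\widetilde{QC}(F_2;\rho)$, so $\dim \widetilde{QC}(F_2;\rho) = \infty$ in this case.

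In the complementary case no rank two free subgroup of $F_2$ fixes a nonzero vector, and the one point genuinely worth checking is that this is equivalent to the standing hypothesis of Proposition \ref{no fixed vector:free}, namely that \emph{no nonabelian} subgroup of $F_2$ fixes a nonzero vector. This equivalence holds because every nonabelian subgroup of a free group is itself free (Nielsen--Schreier) and, being nonabelian, has rank at least two, hence contains a rank two free subgroup; so if some nonabelian subgroup fixed a nonzero vector, it would contain a rank two free subgroup fixing the same vector, against our case assumption. With this in hand I apply Proposition \ref{no fixed vector:free}, again with $w'$ empty and $e \neq 0$ any nonzero vector (which exists since $E \neq 0$), to get an infinite dimensional subspace of $\widetilde{QC}(F_2;\rho)$.

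The two cases are logically exhaustive and each yields $\dim \widetilde{QC}(F_2;\rho) = \infty$, completing the argument. I do not expect a substantial obstacle at this final stage: all of the analytic content---detecting unboundedness of the Brooks quasi-cocycles via uniform convexity (Lemma \ref{unbounded}) and ruling out nearby bounded cocycles via reflexivity (Lemma \ref{bounded cocycle 2})---is already encapsulated in the two propositions. The only care required is the bookkeeping above, verifying that ``no rank two free subgroup fixes a nonzero vector'' matches precisely the hypothesis of Proposition \ref{no fixed vector:free}.
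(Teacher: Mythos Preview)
Your proposal is correct and follows essentially the same approach as the paper: split into the two cases according to whether some rank two free subgroup fixes a nonzero vector, and invoke Proposition \ref{fixed vector:free} or Proposition \ref{no fixed vector:free} (each with $w'$ empty) accordingly. Your extra remark via Nielsen--Schreier, that ``no rank two free subgroup fixes a nonzero vector'' is equivalent to ``no nonabelian subgroup fixes a nonzero vector,'' is a helpful clarification the paper leaves implicit.
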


We remark that Pascal Rolli has a new construction, different from the
Brooks construction, that he showed in \cite{rolli} produces
nontrivial quasi-cocycles on $F_2$ (and some other groups) when the
Banach space $E$ is an $\ell^p$-space (or finite dimensional).

\begin{example}\label{ex.ell.0}
To see the importance of uniform convexity we will look more closely
at the examples $\ell^\infty(F_2)$ of bounded functions and
$\ell^\infty_0(F_2)$ of bounded functions that vanish at infinity. 

(1) 
For
the regular representation on $\ell^\infty(F_2)$ (or any group $G$)
the constant functions determine an one-dimensional invariant
subspace. In particular, any quasi-morphism canonically determines a
quasi-cocycle with image in this invariant subspace. If the original
quasi-morphism is essential one may expect that the associated
quasi-cocycle is also essential. However, for any quasi-cocycle $H$ we
can we define the function $H_0:F_2 \to \ell^\infty(F_2)$ by
$H_0(g)(f) = H(f)(f) -
\rho(g)H(g^{-1}f)(f)=H(f)(f)-H(g^{-1}f)(g^{-1}f)$ and then we can
check that $H_0$ is a cocycle (essentially it is the coboundary of the
0-cochain defined by the function $f \mapsto H(f)(f)$) and that $\|H
- H_0\|_\infty \le \Delta(H)$. In particular, $\widetilde{QC}(F_2;\ell^\infty(F_2))=0$
and $H^2_b(F_2;\ell^\infty(F_2)) = 0$.

(2)
For the regular representation of $F_2$ on $\ell^\infty_0(F_2)$
neither $F_2$ nor any non-trivial subgroup fixes a non-trivial
subspace so we cannot, as in the $\ell^\infty(F_2)$ case, use
quasi-morphisms to construct unbounded quasi-cocycles. Furthermore for
some choices of the vector $e$, the quasi-cocyle $H_{w,e}$ will be
bounded. For example if $e \in \ell^\infty_0(F_2)$ is defined by
$$ e(x) = \left\{ \begin{array}{ll} 1 & x = 1 \\ 0 & x \neq
  1 \end{array} \right.$$ then $\|H_{w,e}(x)\|_\infty = 0 \mbox{ or }
1$ depending on whether $x$ does or doesn't contain a copy of
$w$. More generally if $e \in \ell^1(F_2) \subset \ell^\infty_0(F_2)$
we have that $\|H_{w,e}(x)\|_\infty \le \|e\|_1$.  On the other hand
if we define $f \in \ell^\infty_0(F_2)$ by
$$ f(x) = \left\{ \begin{array}{ll} 1/n & x = w^{-n}, n>0\\ 0 &
  \operatorname{otherwise}\end{array} \right.$$ then $|H_{w,
  f}(w^n)(id)| = \sum_{i=1}^n 1/i$ so $\|H_{w,f}(w^n)\|_\infty$ is
unbounded. We can still construct the cocycle $H_0$ as in the previous
paragraph where $H=H_{w,f}$ but this cocycle will not lie in
$\ell^\infty_0(F_2)$. This example emphasizes an inherent difficulty
in extending our results to a wider class of Banach spaces. 

Note that
$H^n_b(G;\ell^{\infty}(G))=0 (n \ge 1)$ for any group $G$ \cite[Proposition 7.4.1]{monod}
since $\ell^{\infty}(G)$ is a ``relatively injective'' Banach $G$-module
\cite[Chapter II]{monod}, so some
assumption on the Banach space is necessary. 

(3) We also note that $H^2_b(G;\ell_0^{\infty}(G))=0$ for any
countable, {\it exact} group (e.g. $G=F_2$, see \cite{ozawaicm}). This can be seen as
follows.  First, since $\ell^\infty(G)$ is a relatively injective
Banach $G$-module, $H^n_b(G;\ell^{\infty}(G))=0$ for all $n>0$.  From
the long exact sequence in bounded cohomology (\cite[Proposition
  8.2.1]{monod}) induced by the short exact sequence
$0\to\ell_0^\infty(G)\to \ell^\infty(G)\to
\ell^{\infty}(G)/\ell_0^{\infty}(G) \to 0$, it suffices to show
$H^1_b(G, \ell^\infty(G)/\ell_0^{\infty}(G))=0$.
But this holds if $G$ is countable and exact
  \cite[Theorem 3]{ozawa3}.  We thank Narutaka Ozawa for pointing out his
work to us.

To show  $H^n_b(G;\ell_0^\infty(G))=0$ for all $n>1$
it suffices to know $$H^n_b(G, \ell^\infty(G)/\ell_0^{\infty}(G))=0$$
for all $n>0$. Ozawa informs us that this is also true. 

%Note that the quotient
%$\ell^\infty(G)/\ell_0^\infty(G)$ can be identified with the Banach
%space $C(X)$ of continuous functions on the compact space $X=\beta
%G-G$, where $\beta$ denotes the Stone-\v Cech
%compactification. 
%Since $G$ is exact, the action of $G$ on $X$
%is (topologically) amenable, \cite[Section 2]{ozawaicm}.
%Note that if the action of $G$ on $X$ was amenable in the sense
%of Zimmer (see \cite[Section 5]{monod}), then  by \cite{BM} 
%it would follow that the space $C(X)$ is a
%relatively injective Banach $G$-module, so $H^n_b(G;C(X))=0$ for $n>0$.
\end{example}

\section{Hyperbolically embedded subgroups}\label{s:hyp-embed}
Before proving our main theorem we need a couple of straightforward lemmas.

\begin{lemma}\label{finite normal}
Let $\rho$ be a unitary representation of a group $G$ on $E$ and $K$ a
finite normal subgroup. Let $E' \subset E$ be the closed subspace of
$K$-invariant vectors and $\rho'$ the unitary representation of $G$ on
$E'$ obtained by restriction. Then every (quasi)-cocyle in $QC(G;
\rho)$ is a bounded distance from a (quasi)-cocyle in $QC(G;\rho')$
\end{lemma}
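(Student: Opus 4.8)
The plan is to correct $F$ by projecting onto the $K$-invariant subspace and then to show that the part thrown away is bounded. First I would introduce the averaging operator
$$P=\frac1{|K|}\sum_{k\in K}\rho(k)\colon E\to E.$$
Since $\rho$ is unitary we have $\|P\|\le 1$, and a routine check gives $P^2=P$, $P|_{E'}=\mathrm{id}$, and $\operatorname{im}P=E'$, so $P$ is a norm-one projection onto $E'$. The only place normality enters here is $G$-equivariance: because $k\mapsto g^{-1}kg$ permutes $K$, one gets $P\rho(g)=\frac1{|K|}\sum_k\rho(g)\rho(g^{-1}kg)=\rho(g)P$ for all $g\in G$. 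Equivariance of $P$ immediately makes $\tilde F:=PF$ a (quasi-)cocycle for $\rho'$, since
$$PF(gg')-PF(g)-\rho(g)PF(g')=P\bigl(F(gg')-F(g)-\rho(g)F(g')\bigr),$$
whence $\Delta(PF)\le\Delta(F)$ (and $PF$ is a genuine cocycle when $F$ is). As $F(g)-\tilde F(g)=(I-P)F(g)$, the whole lemma reduces to a single claim: the (quasi-)cocycle $F_0:=(I-P)F$, which takes values in the closed $G$-invariant subspace $E_0:=\ker P$, is bounded on $G$.

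Two features of $F_0$ will drive the argument. First, $\sum_{k\in K}\rho(k)=|K|P$ vanishes identically on $E_0$, so $\sum_{k\in K}\rho(k)F_0(g)=0$ for every $g$. Second, since $K$ is finite, $F_0$ takes only finitely many values on $K$, so $M_0:=\max_{k\in K}\|F_0(k)\|<\infty$. I would record that $M_0$ is bounded simply because $F_0|_K$ is a function on a finite set.

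The hard part, which I expect to be the real obstacle (naive projection gives the right $\tilde F$ but says nothing about boundedness), is to bound $F_0(g)$ uniformly. The trick is to compute $F_0(gk)$ for $k\in K$ in two ways. Expanding $gk$ directly gives $F_0(gk)=F_0(g)+\rho(g)F_0(k)+r_1$, while writing $gk=(gkg^{-1})g$ and using $gkg^{-1}\in K$ gives $F_0(gk)=F_0(gkg^{-1})+\rho(gkg^{-1})F_0(g)+r_2$, where $\|r_1\|,\|r_2\|\le\Delta(F_0)$. Equating the two and moving the $K$-values to one side yields
$$\bigl\|(I-\rho(gkg^{-1}))F_0(g)\bigr\|\le 2M_0+2\Delta(F_0)=:C,$$
a bound independent of $g$ and $k$. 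Since conjugation by $g$ permutes $K$, this says $\|(I-\rho(k))F_0(g)\|\le C$ for \emph{every} $k\in K$ and every $g$. I would then finish by averaging over $K$ and invoking the mean-zero property: from $\sum_{k\in K}\rho(k)F_0(g)=0$ and $\rho(k)F_0(g)=F_0(g)+(\rho(k)-I)F_0(g)$ we get $0=|K|F_0(g)+\sum_{k\in K}(\rho(k)-I)F_0(g)$, hence $\|F_0(g)\|\le C$. This proves $F_0$ is bounded, so $\tilde F=PF\in QC(G;\rho')$ lies a bounded distance from $F$; the genuine-cocycle case is identical with all error terms equal to zero.
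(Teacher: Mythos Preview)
Your proof is correct and follows essentially the same approach as the paper: define the averaging projection $P$ onto $E'$, set $\tilde F=PF$, and show $(I-P)F$ is bounded by exploiting normality of $K$ to rewrite $gk=(gkg^{-1})g$ and conclude that the $\rho(K)$-orbit of $F(g)$ has uniformly bounded diameter. The paper phrases the last step geometrically (``$K$-orbits of points in the image of $H$ are uniformly bounded, so $\pi$ moves points in $\operatorname{Im}(H)$ a uniformly bounded amount'') while you carry out the identical computation explicitly, but the content is the same.
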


\begin{proof}
We first define a  linear projection $\pi: E \to E'$ by
$$\pi(x) = \frac{1}{|K|} \sum_{k \in K} \rho(k) x.$$ If $H$ is a
(quasi)-cocycle in $QC(G; \rho)$ then $\tilde H = \pi \circ H$ is a
(quasi)-cocycle in $QC(G; \rho')$. We need to show that $\tilde H$ is
at bounded distance from $H$.

Recall that $H$ is the translational part of an isometric
$G$(-quasi)-action on $E$. By the normality of $K$ if two points in
$E$ are in the same $G$-orbit then their $K$-orbits are
(quasi)-isometric. Since $H(G)$ is the $G$-orbit of $0$ under this
(quasi)-action and $H(Kg)$ is at bounded distance from
the $K$-orbit of $H(g)$ we have that the
$K$-orbits of points in the image of $H$ are uniformly bounded, and so
$\pi$ moves points in $Im(H)$ a uniformly bounded amount.
\end{proof}

\begin{cor}\label{invariant-extension}
The natural map $\widetilde{QC}(G; \rho') \to \widetilde{QC}(G;
\rho)$ is an isomorphism.
\end{cor}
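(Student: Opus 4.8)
The plan is to combine Lemma \ref{finite normal} with a straightforward verification that the two maps it implicitly produces are mutually inverse on the level of the quotient spaces $\tqc$. First I would pin down the ``natural map'' in the statement: since $E' \subset E$ is a closed $G$-invariant subspace and $\rho'$ is just the restriction of $\rho$ to $E'$, any cocycle or quasi-cocycle valued in $E'$ is automatically one valued in $E$, and this inclusion descends to a well-defined linear map $\iota_* : \tqc(G;\rho') \to \tqc(G;\rho)$ because it carries bounded functions to bounded functions and cocycles to cocycles. This $\iota_*$ is the natural map in question.

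The key step is to build the inverse directly from the projection $\pi : E \to E'$ of Lemma \ref{finite normal}. Post-composition with $\pi$ sends a (quasi)-cocycle $H$ in $QC(G;\rho)$ to the (quasi)-cocycle $\pi \circ H$ in $QC(G;\rho')$, as already established in that lemma, and since $\pi$ is a bounded linear operator it sends bounded functions to bounded functions and respects the relevant additive structure. Hence $\pi_*$ descends to a linear map $\tqc(G;\rho) \to \tqc(G;\rho')$. I would then check the two composites. Going one way, $\pi_* \circ \iota_* = \mathrm{id}$ on $\tqc(G;\rho')$ is immediate because $\pi$ restricts to the identity on $E'$, so $\pi \circ H = H$ whenever $H$ already takes values in $E'$. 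Going the other way, $\iota_* \circ \pi_*$ sends the class of $H$ to the class of $\pi \circ H$; the content of Lemma \ref{finite normal} is precisely that $\pi \circ H$ is at bounded distance from $H$, so these two (quasi)-cocycles represent the same class in $\tqc(G;\rho)$, giving $\iota_* \circ \pi_* = \mathrm{id}$.

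The only point requiring genuine care, rather than bookkeeping, is ensuring that $\pi_*$ is \emph{well-defined} on the quotient $\tqc$, i.e. that it kills exactly the subspace generated by bounded functions and by honest cocycles. Bounded functions map under $\pi$ to bounded functions since $\|\pi\| \le 1$, and an honest cocycle for $\rho$ maps to an honest cocycle for $\rho'$ because $\pi$ intertwines $\rho$ and $\rho'$ on the image — this last intertwining uses the normality of $K$ exactly as in Lemma \ref{finite normal}, so the coboundary is preserved. Thus $\pi_*$ respects the defining equivalence relation of $\tqc$, and combined with the two composite identities above we conclude that $\iota_*$ is a linear isomorphism with inverse $\pi_*$. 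I expect no serious obstacle here; the whole corollary is a formal consequence of Lemma \ref{finite normal} once one observes that the bounded-distance statement of that lemma is exactly the assertion that $\pi_*$ and $\iota_*$ are inverse in the quotient.
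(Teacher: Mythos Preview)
Your proposal is correct and is exactly the argument the paper has in mind: the corollary is stated in the paper without proof, as an immediate consequence of Lemma \ref{finite normal}, and what you have written is precisely the routine unpacking of that consequence. The only content is that $\pi$ is $G$-equivariant (by normality of $K$) and restricts to the identity on $E'$, so $\pi_*$ and $\iota_*$ are mutually inverse on $\tqc$; you have identified all of this correctly.
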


\begin{lemma}\label{finite-extension}
Let $\rho$ be a unitary representation of $G \times K$ on $E$ such
that $K$ is finite and $\rho$ restricted to the $K$-factor is
trivial. Then there is a natural isomorphism from $\widetilde{QC}(G
\times K; \rho) \to \widetilde{QC}(G; \rho)$.
\end{lemma}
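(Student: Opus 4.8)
The plan is to identify the asserted natural map as restriction along the inclusion $\iota\colon G\hookrightarrow G\times K$, $g\mapsto(g,1)$, and to exhibit an explicit inverse given by pullback. First I would record the structural observation that, since $\rho$ is trivial on the $K$-factor, we have $\rho(g,k)=\rho(g,1)$ for all $(g,k)$; in other words $\rho$ is pulled back along the projection $p\colon G\times K\to G$ from the representation $g\mapsto\rho(g,1)$ of $G$. Restriction of a (quasi-)cocycle to a subgroup is again a (quasi-)cocycle, and it carries bounded functions to bounded functions and cocycles to cocycles, so $H\mapsto H\circ\iota$ descends to a well-defined linear map $\widetilde{QC}(G\times K;\rho)\to\widetilde{QC}(G;\rho)$. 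Dually, for a function $\bar H\colon G\to E$ the pullback $(p^*\bar H)(g,k)=\bar H(g)$ satisfies $\Delta(p^*\bar H)=\Delta(\bar H)$ and sends cocycles to cocycles (both immediate from $\rho(g,k)=\rho(g,1)$), so $p^*$ also descends to a map $\widetilde{QC}(G;\rho)\to\widetilde{QC}(G\times K;\rho)$. The composition $(\text{restriction})\circ p^*$ is visibly the identity, since $(p^*\bar H)(g,1)=\bar H(g)$.

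The crux is to show that $p^*\circ(\text{restriction})$ is the identity on $\widetilde{QC}(G\times K;\rho)$; equivalently, that every quasi-cocycle $H$ on $G\times K$ lies at bounded distance from the pullback $p^*(H\circ\iota)$, i.e. that $\sup_{g,k}\|H(g,k)-H(g,1)\|<\infty$. The main step is to control $H$ on the $K$-factor. Since $\rho$ is trivial on $K$, the restriction $H|_K$ is a quasi-morphism from the finite group $K$ into the vector space $E$ with defect $\le\Delta(H)$; writing $n$ for the order of $k$ and using $k^n=1$ together with $\|H(1)\|\le\Delta(H)$, an easy induction gives $\|nH(1,k)-H(1)\|\le (n-1)\Delta(H)$ and hence the uniform bound $\|H(1,k)\|\le\Delta(H)$ for all $k\in K$.

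With this in hand the rest is routine: factoring $(g,k)=(g,1)(1,k)$ and applying the quasi-cocycle inequality gives $\|H(g,k)-H(g,1)-\rho(g,1)H(1,k)\|\le\Delta(H)$, and since $\rho(g,1)$ is an isometry and $\|H(1,k)\|\le\Delta(H)$, we obtain $\|H(g,k)-H(g,1)\|\le 2\Delta(H)$, uniformly in $(g,k)$. Thus $H$ and $p^*(H\circ\iota)$ differ by a bounded function, so $p^*\circ(\text{restriction})$ is the identity on $\widetilde{QC}(G\times K;\rho)$. Combined with the previous paragraph, this shows the restriction map is an isomorphism with inverse $p^*$, and it is natural since it is induced by the group homomorphism $\iota$. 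I expect the only genuine content to be the boundedness of $H|_K$; everything else is formal, and the finiteness of $K$ is used precisely (and only) at that step.
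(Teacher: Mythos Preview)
Your proof is correct and follows essentially the same approach as the paper: both define the forward map by restriction $H\mapsto H(\cdot,1)$ and the inverse by pullback along the projection (extending constantly on the $K$-factor), and both reduce the nontrivial direction to bounding $\|H(g,k)-H(g,1)\|$ via the quasi-cocycle inequality applied to the factorization $(g,k)=(g,1)(1,k)$. The only minor difference is in bounding $\|H(1,k)\|$: you derive the explicit uniform bound $\|H(1,k)\|\le\Delta(H)$ from the finite order of $k$, whereas the paper simply observes that $C=\max_{k\in K}\|H(1,k)\|$ is finite because $K$ is finite, obtaining $\|H(g,k)-H(g,1)\|\le\Delta(H)+C$.
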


\begin{proof}
Given $H \in QC(G \times K; \rho)$ define $\tilde H \in QC(G; \rho)$
by $\tilde H(g) = H(g, id)$. The linear map defined by $H \mapsto \tilde H$ descends to a
linear map $\widetilde{QC}(G \times K; \rho) \to \widetilde{QC}(G;
\rho)$. Any quasi-cocycle in $QC(G; \rho)$ determines a quasi-cocylce
in $QC(G\times K; \rho)$ by extending it to be constant on the
$K$-factor. This also descends to a map $\widetilde{QC}(G; \rho) \to
\widetilde{QC}(G \times K; \rho)$, which is an inverse of our first
map
since $\|H(g, k) - H(g, id)\| \le
\Delta(H) + C$ where $C = \max\{\|H(id,k)\| | k \in K\}$.
Hence we have the desired isomorphism.
\end{proof}

In \cite{DGO}, Dahmani, Guiradel and Osin defined the notion of a {\em
  hyperbolically embedded subgroup}. For convenience we recount the
definition here. Let $G$ be a group, $H$ a subgroup and $X \subset G$
such that $X \cup H$ generates $G$. Let $\Gamma(G, X \cup H)$ be the
Cayley graph with generating set $X \cup H$. Then $H$ is
hyperbolically embedded in $G$ if
\begin{itemize}
\item $\Gamma(G, X \cup H)$ is hyperbolic;

\item For all $n>0$ and $h \in H$ there are at most finitely many $h'
  \in H$ that can be connected to $h$ in $\Gamma(G,X \cup H)$ by a
  path of length $\le n$ with no edges in $H$.
\end{itemize}
A quasi-cocycle is {\em anti-symmetric} if
$$H(g^{-1}) = -\rho(g^{-1})H(g).$$ A cocycle automatically satisfies
this condition. Furthermore every quasi-cocycle is a bounded distance
from an anti-symmetric quasi-cocycle. (Simply replace $H(g)$ with
$\frac12(H(g) - \rho(g) H(g^{-1})$.) We have the following important
theorem of Hull and Osin.
\begin{thm}[\cite{hull.osin}]\label{Hull-Osin}
Let $G$ be a group and $F$ a hyperbolically embedded subgroup. Then there exists a linear map
$$\iota : QC_{as}(F; \rho) \to QC_{as}(G;\rho)$$ such that if $H \in
QC_{as}(F;\rho)$ then $H = \iota(H)|_F$. In particular, $\dim
\widetilde{QC}(F;\rho) \le \dim \widetilde{QC}(G;\rho)$.
\end{thm}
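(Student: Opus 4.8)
The plan is to adapt the Brooks construction of Section~\ref{s:free} to the relative Cayley graph $\Gamma = \Gamma(G, X\cup F)$, which is $\delta$-hyperbolic by hypothesis. First I would recall from \cite{DGO} the combinatorics of geodesics in $\Gamma$: a geodesic alternates between edges labelled by $X$ and maximal subpaths labelled by elements of $F$, the latter called $F$-\emph{components}, each carrying an element $h\in F$. The second clause in the definition of a hyperbolically embedded subgroup supplies the \emph{isolation} property: there is a constant $C=C(\delta)$ so that $F$-components of a geodesic triangle joined by a path of length $\le C$ with no $F$-edges carry the same $F$-element, and, crucially, a \emph{long} $F$-component that is $\delta$-fellow-travelled by another side must be literally shared. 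For each $g\in G$ fix once and for all a geodesic $\gamma_g$ from $1$ to $g$, whose $F$-components start at vertices $u_1,\dots,u_k$ and carry labels $h_1,\dots,h_k\in F$. For an antisymmetric quasi-cocycle $q\in QC_{as}(F;\rho)$ I would set
$$\iota(q)(g) = \hat q(g) = \sum_{i=1}^{k}\rho(u_i)\,q(h_i),$$
a finite sum, so $\hat q$ is well defined; since $\rho$ is unitary and the sum is term-by-term linear in $q$, the assignment $q\mapsto\hat q$ is linear.

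The core of the proof is to bound the defect $\Delta(\hat q)$ by the thin-triangle estimate applied to the triangle with vertices $1,g,gg'$ and sides $\gamma_g$, the translate $g\gamma_{g'}$, and $\gamma_{gg'}$. Away from a bounded neighbourhood of the centre, every $F$-component fellow-travels a component of exactly one other side; by the isolation lemma the long ones are genuinely shared, so the twisted summands $\rho(u_i)q(h_i)$ coincide and cancel in $\hat q(gg')-\hat q(g)-\rho(g)\hat q(g')$ (antisymmetry of $q$ absorbing the orientation reversal, as in Proposition~\ref{free-Brooks}, and the prefactor $\rho(g)$ the change of basepoint from $\gamma_{g'}$ to $g\gamma_{g'}$). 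The decisive contribution is that of the coset meeting the centre: its increment $h$ may be long, so $\|q(h)\|$ is not controlled individually, but the three sides traverse this coset so that, up to bounded error, $h=h_1h_2$ with $\gamma_g$ supplying $h_1$ and $g\gamma_{g'}$ supplying $h_2$; the net contribution is then
$$\rho(u)\bigl(q(h_1h_2)-q(h_1)-\rho(h_1)q(h_2)\bigr),$$
of norm at most $\Delta(q)$ by the quasi-cocycle relation, and only boundedly many cosets meet the central region, giving $\Delta(\hat q)\le N(\delta)\bigl(\Delta(q)+C_0\bigr)<\infty$. This defect estimate is the main obstacle: precisely because $q$ is \emph{unbounded} on $F$ one cannot bound the surviving terms coordinate-by-coordinate as in the trivial-coefficient case, and must instead exhibit them as instances of the quasi-cocycle relation for $q$, which is exactly where isolation of long components and the finiteness clause (controlling the short components and basepoint discrepancies through a finite set of possibilities) are indispensable. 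Antisymmetrizing at bounded cost then places $\hat q$ in $QC_{as}(G;\rho)$.

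Finally I would identify the restriction and deduce the dimension inequality. For $h\in F$ a geodesic from $1$ to $h$ in $\Gamma$ is a single $F$-edge, so $\hat q(h)=q(h)$; that is, $\iota(q)|_F=q$, the displayed identity of the theorem. Consequently $\iota$ descends to a linear map $\widetilde{QC}(F;\rho)\to\widetilde{QC}(G;\rho)$, since if $q$ lies at bounded distance from a cocycle then so does $\hat q$. This descended map is injective: if $\iota(q)$ is at bounded distance from a cocycle $G_0$ on $G$, restricting to $F$ exhibits $q=\iota(q)|_F$ as a bounded distance from the cocycle $G_0|_F$, hence trivial in $\widetilde{QC}(F;\rho)$. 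An injective linear map yields $\dim\widetilde{QC}(F;\rho)\le\dim\widetilde{QC}(G;\rho)$, completing the argument.
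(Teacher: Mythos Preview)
The paper does not prove this theorem at all: it is quoted from Hull--Osin \cite{hull.osin} and used as a black box, so there is no ``paper's own proof'' to compare against. What you have written is a reasonable outline of the Hull--Osin induction construction itself, and the shape of the argument (sum $\rho(u_i)q(h_i)$ over $F$-components of a fixed geodesic, bound the defect via thin triangles and the isolation/bounded-coset-penetration property, observe that on $F$ a geodesic is a single edge so the restriction is $q$) is correct.

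There is, however, one genuine slip in your final paragraph. You assert that $\iota$ \emph{descends} to a map $\widetilde{QC}(F;\rho)\to\widetilde{QC}(G;\rho)$ ``since if $q$ lies at bounded distance from a cocycle then so does $\hat q$''. That justification is wrong: if $q=c+b$ with $b$ bounded on $F$, then $\hat b(g)=\sum_i\rho(u_i)b(h_i)$ has as many summands as $\gamma_g$ has $F$-components, which is unbounded in $g$, so $\hat b$ need not be bounded on $G$; nor is $\hat c$ obviously a cocycle. Fortunately you do not need descent. Your subsequent injectivity argument is the whole story and should be stated directly at the level of $QC$: if $q_1,\dots,q_n\in QC_{as}(F;\rho)$ have linearly independent images in $\widetilde{QC}(F;\rho)$ and $\sum c_i\iota(q_i)$ equals a cocycle plus a bounded function on $G$, then restricting to $F$ (restriction preserves both ``cocycle'' and ``bounded'') gives $\sum c_i q_i$ trivial in $\widetilde{QC}(F;\rho)$, hence all $c_i=0$. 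That yields $\dim\widetilde{QC}(F;\rho)\le\dim\widetilde{QC}(G;\rho)$ without ever asking $\iota$ to descend. Simply delete the descent sentence and phrase the last step this way.
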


The action of a group $G$ on a metric space $X$ is {\em acylindrical}
if for all $B>0$ there exist $D,N$ such that if $x,y \in X$ and with
$d(x,y)> D$ then there are at most $N$ elements $g \in G$ with $d(x,
gx)<B$ and $d(y,gy)<B$. A group $G$ is {\em acylindrically hyperbolic}
if it has an acylindrical,, non-elementary, action on a $\delta$-hyperbolic space.  To
apply the previous theorem we need the following result of
Dahmani-Guiradel-Osin and Osin:
\begin{thm}[\cite{DGO, osin2}]\label{hyp-embed}
Let $G$ be an acylindrically hyperbolic group and $K$ the maximal finite normal subgroup. Then $G$ contains a hyperbolically embedded copy of $F_2 \times K$.
\end{thm}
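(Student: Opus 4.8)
Since the statement is attributed to \cite{DGO, osin2}, my plan is to reconstruct the Dahmani--Guirardel--Osin argument. First I would fix a non-elementary acylindrical action of $G$ on a hyperbolic space $S$, which exists by the definition of acylindrical hyperbolicity. Non-elementarity supplies a pair of independent loxodromic elements $a,b$ (loxodromics with disjoint fixed-point sets $\{a^{+},a^{-}\}$ and $\{b^{+},b^{-}\}$ on $\partial S$), and acylindricity guarantees that every loxodromic element is automatically WPD. For such a $g$ one has the maximal elementary (virtually cyclic) subgroup $E(g)$, namely the stabilizer of the pair $\{g^{+},g^{-}\}$, and the key input from \cite{DGO} is that $E(g)$ is hyperbolically embedded in $G$ and, more generally, that the elementary closures of finitely many pairwise independent loxodromics form a hyperbolically embedded family.

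Second, I would bring in the finite radical $K$. The crucial soft observation is that a power of any loxodromic $g$ centralizes $K$: since $K$ is finite and normal, the elements $g^{n}kg^{-n}$ all lie in the finite set $K$, so by pigeonhole some positive power $g^{m}$ commutes with each $k\in K$, and after replacing $m$ by a common multiple we get $g^{m}\in C_G(K)$; note that $g^{m}$ is still loxodromic. Applying this to $a$ and $b$, I may assume $a,b\in C_G(K)$ while remaining independent loxodromics. (As a byproduct this also yields $K\le E(g)$ for every loxodromic $g$, since $k\in K$ then preserves $\{g^{+},g^{-}\}$.)

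Third, passing to sufficiently high powers $a^{N},b^{N}$ and invoking the ping-pong / small-cancellation machinery of \cite{DGO}, which controls how the translates of the quasi-axes meet, the subgroup $F=\langle a^{N},b^{N}\rangle$ is free of rank two and hyperbolically embedded in $G$. Because $F\subset C_G(K)$, its elements commute with $K$; because $F$ is torsion-free and $K$ is finite, $F\cap K=1$; and $K$ is normal. Hence $\langle F,K\rangle$ is the internal direct product $F\times K\cong F_2\times K$. It remains to see that this product is itself hyperbolically embedded, which follows because $F$ sits inside it with finite index $|K|$ and hyperbolic embeddedness is insensitive to such finite extensions inside $G$ (again a lemma available in \cite{DGO}).

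The main obstacle is the third step. The first two steps are essentially combinatorial and soft, but verifying hyperbolic embeddedness is the technical heart of the result: it is exactly where the full strength of \cite{DGO} is required, namely the geometry of isolated components and the resulting bounded-coset-penetration type estimates that certify the two defining conditions of a hyperbolically embedded subgroup. Establishing these estimates for the high powers $a^{N},b^{N}$ simultaneously, and then checking that multiplying by the finite normal $K$ does not destroy them, is the delicate part that I would expect to consume the bulk of a complete proof.
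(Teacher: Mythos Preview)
The paper does not prove this theorem at all: it is quoted as a black-box result from the cited references. The only content the paper provides is the remark immediately following the statement, which records that the theorem is the concatenation of two cited facts: (a) by \cite[Theorem 1.2]{osin2}, an acylindrically hyperbolic group contains a non-degenerate hyperbolically embedded subgroup, and (b) by \cite[Theorem 2.24]{DGO}, any group with such a subgroup contains a hyperbolically embedded copy of $F_2\times K$; the remark also notes that (b) relies on the projection complex of \cite{bbf}.

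Your proposal, by contrast, is an outline of the actual DGO argument underlying (b). The ingredients you list---independent loxodromics from non-elementarity, automatic WPD from acylindricity, the pigeonhole argument putting $K$ inside the centralizer of suitable powers, and the passage to high powers via ping-pong/small-cancellation---are the right ones, and you are correct that the verification of hyperbolic embeddedness is the technical core. One small inaccuracy of emphasis: in the standard DGO route, what one first shows to be hyperbolically embedded is the family of elementary closures $\{E(a),E(b)\}$ (each of which already contains $K$), rather than the bare free subgroup $F=\langle a^{N},b^{N}\rangle$; the copy of $F_2\times K$ is then extracted from this, rather than obtained by first embedding $F$ and afterwards adjoining $K$. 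This does not affect the overall correctness of your sketch, but it explains why the ``finite-index extension'' step you flag at the end is not really a separate obstacle. In any case, none of this is carried out in the present paper; for the purposes of comparison, the paper's ``proof'' is simply the citation and the two-line remark.
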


\begin{remark}
Theorem \ref{hyp-embed} is a combination of two theorems. In
\cite[Theorem 1.2]{osin2}, Osin proves that an acylindrically hyperbolical group
contains a non-degenerate hyperbolically embedded  subgroup. In \cite[Theorem 2.24]{DGO},
Dahmani-Guirardel-Osin 
show that if $G$ contains a non-degenerate hyperbolically embedded subgroup then it
contains a hyperbolically embedded copy of $F_2 \times K$. We note
that this latter theorem relies on the projection complex defined in
\cite{bbf}.
\end{remark}

\begin{proof}[Proof of Corollary \ref{main:acylind}]  
Let $E'\subset E$ be the subspace fixed by $K$ and $\rho'$ the
restriction of $\rho$ to $E'$. By assumption $\dim E' > 0$. By Theorem
\ref{hyp-embed} there is a copy of $F_2 \times K$ hyperbolically
embedded in $G$.  By Lemma \ref{finite-extension} and Theorem
\ref{main:free} we have that $\dim \widetilde{QC}(F_2 \times K; \rho')
= \dim \widetilde{QC}(F_2, \rho') = \infty$.  Corollary
\ref{invariant-extension} implies that $\dim \widetilde{QC}(F_2 \times
K; \rho) = \infty$. The corollary then follows from Theorem
\ref{Hull-Osin}. \end{proof}

\bibliography{./../ref2}

\end{document}